\title{Large Deviation in Harnack type Dirichlet spaces}
\date{}
\author{Ann-Kathrin Jarecki}
\newtheorem{thm}{Theorem}
\newtheorem{cor}[thm]{Corollary}
\newtheorem{lemma}[thm]{Lemma}
\newtheorem{prop}[thm]{Proposition}
\newtheorem{defn}[thm]{Definition}
\newtheorem{rem}[thm]{Remark}
\numberwithin{thm}{section} \numberwithin{equation}{section}
\newenvironment{proof}{{\bf
Proof:\,}}{\hspace*{\fill}\rule{1.2ex}{1.2ex}\\ }
\newcommand{\D}{\mathds{D}}
\newcommand{\E}{\mathcal{E}}
\newcommand{\X}{\mathcal{X}}
\newcommand{\Y}{\mathcal{Y}}
\newcommand{\C}{\mathcal{C}}
\newcommand{\N}{\mathds{N}}
\newcommand{\R}{\mathds{R}}
\newcommand{\Prob}{\mathds{P}}
\newcommand{\eins}{\mathds{1}}
\DeclareMathOperator{\supp}{supp}
\DeclareMathOperator*{\esss}{esssup}
\DeclareMathOperator*{\essi}{essinf}
\DeclareMathOperator{\vol}{Vol}
\begin{document}

\maketitle

\begin{abstract}
In the framework of Harnack type Dirichlet forms, we prove a large deviation principle for the asymptotics of reversible Markov processes with rate function given by the energy of the paths.

\end{abstract}

\section{Harnack type Dirichlet spaces}

\subsection{Framework}

In the following we consider a fixed regular Dirichlet form $(\E,\D)$ with domain $\D \subset L^2(\X,m)$. The underlying topological space is a locally compact separable metric space and $m$ a positive radon measure with $\supp(m)=\X$ (such that any open relatively compact nonempty set has positive measure). Let $\{T_t\}_{t > 0}$ be the associated self--adjoint strongly continuous semigroup on $L^2(\X,m)$ and $A$ the corresponding infinitesimal generator. We assume the Dirichlet form to be strongly local, i.e. $\E(u,v)=0$ if $u,v \in \D$ have compact support and $v$ is constant on a neighbourhood of the support of $u$.

\begin{rem}
A Dirichlet form $\E$ is called regular if $\D \cap \C_c(\X)$ is dense in $\C_c(\X)$ in the sup norm $||u||_\infty = \sup_{\X}\{|u|\}$ and dense in $\D$ in the norm $\E_1(u,u)^{1/2}=\sqrt{||u||_2^2 + \E(u,u)}$. Hence there is a connection between $\E$ and the topology of $\X$.
\end{rem}

Let $\D_0=\{f \in \D_b \ :\ I_f(h)\leq \|h\|_{L^1} \mbox{ for all } h
\in \D_b=\D \cap L^{\infty}\}$. We can define in an intrinsic way a pseudo metric $d$ on $\X$

\begin{equation}\label{pseudoMet}
d(x,y)=\sup_{f \in \D_0} \big\{f(x)-f(y)\big\}.
\end{equation}

In general, $d$ may be degenerate, i.e. $d(x,y)=\infty$ or
$d(x,y)=0$ for some $x\neq y$.

In addition to the previous assumptions we further assume the Dirichlet form $\E$ to be strongly regular in the following sense:

\begin{defn} A strongly local, symmetric Dirichlet form $\E$ is called strongly regular if it is regular an if $d$ (defined by \ref{pseudoMet}) is a metric on $\X$ whose topology coincides with the original one.
\end{defn}

\begin{rem}
Strong regularity implies, that $d$ is non--degenerate, $\X$ is connected an for any $y\in \X$ the function $f \mapsto d(x,y)$ is continuous. Hence any ball $B_r(x)=\{y \in \X : d(x,y) < r\}$ is connected and its boundary coincides with the sphere $S_r(x)=\{y \in \X : d(x,y)=r\}$. For any fixed $x \in \X$ and sufficiently small $r>0$ the closed balls $\overline{B}_r(x)$ are compact and thus complete. But this not necessarily imply that all balls $B_r$ are relatively compact in $\X$. This is true if and only if the metric space $(\X,d)$ is complete.
\end{rem}

\subsection{Local weak solutions}

Identify the Hilbert space $L^2(\X,m)$ with its own dual (using the inner product). Denote the dual of $\D$ by $\D^*$. Then we have the following continuous and dense embeddings $\D \subset L^2(\X,m) \subset \D^*$.
Let $V \subset \X$ be a nonempty open subset and $I \subset \R$ a open time interval. We now want to define what is a weak local solution of the heat equation $\partial_t u = A u$ in the time--space cylinder $I \times V$. We define:

\begin{itemize}
    \item $L^2(I \to \D)$ being the Hilbert space of functions $u : I \to \D$ such that
    $$||u||_{L^2(I \to \D)}=\left(\int_I ||u_t||^2_{\D} dt\right)^{1/2} < \infty;$$

    \item $H^1(I \to \D^*) \subset L^2(I \to \D)$ being the Hilbert space of those functions $u \in L^2(I \to \D)$ whose distributional time derivative $\frac{\partial}{\partial_t} \in L^2(I \to \D^*)$ equipped with the norm
    $$||u||_{H^1(I \to \D^*)}=\left(\int_I ||u_t||^2_{\D^*} + ||\frac{\partial}{\partial_t}u_t||^2_{\D^*}dt\right)^{1/2} < \infty;$$

    \item $\D(I \times \X):= L^2(I \to \D) \cap H^1(I \to \D^*)$ being a Hilbert space with norm
    $$||u||_{\D(I \times \X)}=\left(\int_I ||u_t||^2_{\D} + ||\frac{\partial}{\partial_t}u_t||^2_{\D^*}dt\right)^{1/2} < \infty;$$

    \item $\D_{loc}(I \times V)$ being the set of all functions $u : I \times V \to \R$ such that for any open interval $I' \subset I$ relatively compact in $I$ and any open subset $V'$ relatively compact in $V$ there exists a function $u' \in \D(I \times \X)$ satisfying $u=u'$ a.e. in $I' \times V'$;

    \item $\D_c(I \times V)=\{u \in \D(I \times \X) : u_t(\cdot) \mbox{ has compact support in } V \mbox{ for a.a. } t \in I\}.$
\end{itemize}

\begin{defn}
A function $u : I \times V \to \R$ is a weal (local) solution of the heat equation $(\frac{\partial}{\partial_t} - A)u=0$ in $I \times V$ if
\begin{itemize}
    \item[(i)] $u \in \D_{loc}(I \times V$)

    \item[(ii)] for any open interval $J$ relatively compact in $I$ and $\varphi \in \D_c(I \times V)$
    $$\int_J\int_V \varphi_t \frac{\partial}{\partial_t}u_t dm~dt + \int_J\E(\varphi_t u_t) dt.$$
\end{itemize}
\end{defn}

\subsection{The Harnack inequality}

The Harnack inequality is an inequality relating the values of local solutions $u : (t,x) \to u_t(x)$ of $(A - \frac{\partial}{\partial_t})u=0$ on $\R \times \X$. Let $Y\subset \X$ an arbitrary subset.

\begin{defn}
The Harnack inequality for the operator $(A-\frac{\partial}{\partial_t})u=0$ is satisfied if there exists a constant $C=C(Y)$ such that for all $x\in \X$, all balls $B_{2r}\subset Y$ and all $t\in \R$
\begin{equation*}\tag{HI} \label{HI}
\sup_{(s,y)\in Q^-} u_s(y) \leq C \inf_{(s,y)\in Q^+} u_s(y)
\end{equation*}
whenever $u$ is a nonnegative solution of the heat equation $(A - \frac{\partial}{\partial_t})u=0$ on $Q=]t-4r^2,t[\times B_{2r}(x)$. Here $Q^-=]t-3r^2,t-2r^2[\times B_{r}(x)$ and $Q^+=]t-r^2,t[\times B_{r}(x)$.

\end{defn}

\begin{rem}
Actually one has to replace the $\inf$ and $\sup$ in (\ref{HI}) by $\essi$ and $\esss$, i.e. the estimate is correct up to sets of measure zero. But one consequence of the Harnack inequality is a quantitative H\"older inequality, thus all weak solutions admit a continuous version, so it is not necessary to use $\essi$ or $\esss$.
\end{rem}

\begin{defn}
Let $(\X,m)$ as before, $Y\subset \X$ an arbitrary subset and $(\E,\D)$ a strongly regular and strictly local Dirichlet form. We assume that for all balls $B_{2r}(x)\subset Y$ the closed balls $\overline{B}_r(x)$ are complete. If additionally $(\E,\D)$ satisfies the Harnack inequality (\ref{HI}) we call $(\X,m,\E,\D)$ a \textsc{Harnack type Dirichlet space}.
\end{defn}

Before we are able to state an important consequence of the Harnack inequality we have to establish the following

\begin{defn}
Let $(\X,m)$ as above an $(\E,\D)$ a strongly regular and strictly local Dirichlet form and $Y \subset \X$ an arbitrary fixed subset. Then we say
    \begin{itemize}
    \item the \textsc{doubling volume property} holds if there is a constant $N=N(Y)$ such that for all balls $B_{2r}(x) \subset Y$
    \begin{equation*}\tag{VD} \label{VD}
    m(B_{2r})\leq 2^N m(B_r(x)).
    \end{equation*}

    \item the \textsc{weak Poincar\'{e} inequality} holds if there exists a constant $\kappa=\kappa(Y)$ such that for all balls $B_{2r} \subset Y$
    \begin{equation*}\tag{PI}\label{PI}
    \int_{B_r(x)}|u-u_{r,x}|^2 dm \leq \kappa ~ r^2 \int_{B_{2r}(x)}d\Gamma(u,u)
    \end{equation*}
    for all $u \in \D$ where $u_{r,x}=\frac{1}{m(B_r(x))} \int_{B_r(x)u dm}$ denotes the average of $u$ over $B_r(x)$.
    \end{itemize}
\end{defn}

Now we are able to state the following theorem (cf. \cite{SIII})

\begin{thm}\label{equi}
Assume that for all balls $B_{2r}(x)\subset Y$ the closed balls $\overline{B}_{r}(x)$ are compact then the following are equivalent:
    \begin{itemize}
    \item[(i)] The volume doubling property (\ref{VD}) and the Poincar\'{e} inequality (\ref{PI}) hold true on $Y$.

    \item[(ii)] The parabolic Harnack inequality (\ref{HI}) holds true for the operator $(L-\frac{\partial}{\partial_t})$ on $\R \times Y$ holds true.
    \end{itemize}
\end{thm}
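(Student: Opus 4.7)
I would establish the two implications separately; the direction $(i)\Rightarrow(ii)$ is substantially harder and is where the bulk of the work resides. Throughout I would localize to a ball $B_{2r}(x)\subset Y$ and work with the energy measure $d\Gamma$ associated to the strongly local Dirichlet form.

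For $(i)\Rightarrow(ii)$ I would follow the parabolic Moser iteration scheme adapted to the Dirichlet form setting. First, I would combine (\ref{VD}) and (\ref{PI}) to derive a scale-invariant family of Sobolev--Poincar\'e inequalities on balls, exploiting the $L^p$-inclusion chain that (\ref{VD}) affords. Next, I would prove Caccioppoli-type energy estimates for nonnegative subsolutions $u$ by testing the weak formulation against $\eta^2 u^{p-1}$ for suitable cut-offs $\eta \in \D_c$, and feed these into a Moser iteration to obtain an $L^p \to L^\infty$ mean-value inequality on the cylinder $Q^+$. Iterating on negative powers of a positive supersolution produces the dual $L^{-p} \to L^{-\infty}$ bound on $Q^-$. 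The bridge between these two one-sided estimates is the logarithmic Caccioppoli inequality: testing against $\eta^2/u$ and applying (\ref{PI}) to $\log u$ places $\log u$ in a parabolic BMO-space, after which the Bombieri--Giusti abstract lemma (combined with the Moser $L^p \to L^\infty$ step) yields (\ref{HI}) with a constant $C=C(N,\kappa)$.

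For the converse $(ii)\Rightarrow(i)$ I would exploit that applying (\ref{HI}) to the fundamental solution $p_t(x,\cdot)$ of $(A-\partial_t)$ produces matching two-sided Gaussian heat-kernel estimates. The doubling property is then extracted from the conservation identity $\int_\X p_t(x,y)\,dm(y)=1$ together with the on-diagonal lower bound $p_{r^2}(x,x)\ge c\,m(B_r(x))^{-1}$ by comparing scales $r$ and $2r$. To recover (\ref{PI}) I would decompose $u-u_{r,x}$ via the semigroup $(T_t)$ at time $t\asymp r^2$, bounding the heat-flow part through $\int_0^{r^2}\E(T_s u,T_s u)\,ds \le r^2\,\E(u,u)$ and the remainder via the heat-kernel bounds, and then integrate against a suitable cut-off to produce the weak Poincar\'e inequality with $\kappa=\kappa(C)$.

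The main obstacle is the logarithmic/BMO step in the hard direction: the classical Euclidean argument relies on the chain rule for $\log u$, whereas in the abstract Dirichlet form framework one has only the truncation property and strong locality to justify the identity $d\Gamma(\log u,\log u) = u^{-2}\,d\Gamma(u,u)$ on $\set{u>0}$. Making this rigorous requires approximating $\log$ by bounded normal contractions and tracking that the resulting distributional identities survive along the parabolic flow. Once that estimate is in place, the Bombieri--Giusti lemma delivers (\ref{HI}) essentially as a black box, and one merely has to check that the universal constants depend only on $N(Y)$ and $\kappa(Y)$.
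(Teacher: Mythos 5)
The paper does not give its own proof of Theorem~\ref{equi}: it is quoted as a known result with the citation to \cite{SIII} (Sturm, \emph{Analysis on local Dirichlet spaces III}), so there is no in-text argument to compare yours against. That said, your outline is essentially the argument in that reference and in the parallel works of Grigor'yan and Saloff-Coste: Moser iteration built on the local Sobolev inequality from (\ref{VD})--(\ref{PI}), the logarithmic Caccioppoli estimate justified via the truncation property and chain rule of strongly local forms, and the Bombieri--Giusti abstract lemma for (\ref{HI}); and, conversely, two-sided heat kernel bounds from (\ref{HI}) feeding back into (\ref{VD}) and (\ref{PI}). One caveat in your sketch of $(ii)\Rightarrow(i)$: you lean on the conservation identity $\int_\X p_t(x,\cdot)\,dm = 1$, which is not free in this localized setting (completeness, or at least a local form of conservativeness on $Y$, is needed); the standard local extraction of (\ref{VD}) instead compares $p_{r^2}(x,x)$ and $p_{4r^2}(x,x)$ via a Harnack chain together with $\int p_t(x,\cdot)^2\,dm = p_{2t}(x,x)$, avoiding global mass conservation. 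With that substitution the plan is sound and is, for all practical purposes, the approach on which the cited theorem rests.
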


\begin{lemma}\label{LDPVD}
For $m$--a.e. $x\in \X$ and all $\varepsilon \in (0,1]$, such that $\vol(\sqrt{\varepsilon},x):=m\big[B_{\sqrt{\varepsilon}}(x)\big]>0$ we have
$$\lim_{t\searrow 0} t \log \vol(\sqrt{\varepsilon t},x) =0.$$
\end{lemma}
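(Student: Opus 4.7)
The plan is to derive the limit directly from the volume doubling property~(\ref{VD}), which by Theorem~\ref{equi} is available on any Harnack type Dirichlet space. The strategy combines a polynomial lower bound on $\vol(\sqrt{\varepsilon t},x)$ obtained by iterating (VD) with the trivial monotone upper bound $\vol(\sqrt{\varepsilon t},x) \leq \vol(\sqrt{\varepsilon},x)$, and then squeezes using $t\log t \to 0$.

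For the lower bound I fix $x$ lying well inside a set $Y$ on which (VD) holds with some constant $N=N(Y)$, chosen so that $B_{2\sqrt\varepsilon}(x)\subset Y$. For $t\in(0,1]$ pick the smallest integer $k=k(t)\geq 0$ with $2^k\sqrt{\varepsilon t}\geq\sqrt\varepsilon$; then $k\leq \tfrac12\log_2(1/t)+1$, hence $2^{Nk}\leq 2^N t^{-N/2}$. Iterating~(\ref{VD}) $k$ times along the radii $\sqrt{\varepsilon t},\,2\sqrt{\varepsilon t},\,\dots,\,2^{k-1}\sqrt{\varepsilon t}$ (all admissible since the largest ball $B_{2^k\sqrt{\varepsilon t}}(x)\subset B_{2\sqrt\varepsilon}(x)\subset Y$) yields
\begin{equation*}
\vol(\sqrt\varepsilon,x)\leq m\!\left(B_{2^k\sqrt{\varepsilon t}}(x)\right)\leq 2^{Nk}\,\vol(\sqrt{\varepsilon t},x)\leq 2^N t^{-N/2}\,\vol(\sqrt{\varepsilon t},x),
\end{equation*}
so $\vol(\sqrt{\varepsilon t},x)\geq 2^{-N} t^{N/2}\,\vol(\sqrt\varepsilon,x)$. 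Taking logarithms and multiplying by $t>0$,
\begin{equation*}
t\log\vol(\sqrt{\varepsilon t},x)\geq \tfrac{N}{2}\,t\log t - N t\log 2 + t\log\vol(\sqrt\varepsilon,x),
\end{equation*}
and each term on the right tends to $0$; hence $\liminf_{t\searrow 0} t\log\vol(\sqrt{\varepsilon t},x)\geq 0$.

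For the matching upper bound, $B_{\sqrt{\varepsilon t}}(x)\subset B_{\sqrt\varepsilon}(x)$ for $t\leq 1$ gives $\vol(\sqrt{\varepsilon t},x)\leq \vol(\sqrt\varepsilon,x)<\infty$, the finiteness coming from local finiteness of the Radon measure $m$ on the compact closed ball $\overline B_{\sqrt\varepsilon}(x)$. Taking logs and multiplying by $t$ then gives $\limsup_{t\searrow 0} t\log\vol(\sqrt{\varepsilon t},x)\leq 0$, which together with the lower bound settles the claim.

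The only subtlety, which accounts for the ``$m$-a.e.'' clause, is the choice of the ambient set $Y$: we need (VD) to apply on a set containing $B_{2\sqrt\varepsilon}(x)$. This is handled by exhausting $\X$ by an increasing sequence $(Y_n)$ on which Theorem~\ref{equi} supplies (VD), so that a set of full $m$-measure of points $x$ lies well inside some $Y_n$. I expect this localization to be a technicality rather than the main difficulty; the real content is the standard doubling iteration above, and I do not anticipate any serious obstacle beyond keeping track of the dependence of $N$ on $Y$.
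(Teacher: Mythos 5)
Your proposal is correct and follows essentially the same route as the paper: both bound $\vol(\sqrt{\varepsilon t},x)$ from below by a power $t^{N/2}$ of $t$ via the doubling property (you spell out the dyadic iteration that the paper encapsulates in the bound $\vol(r',x)\le (r'/r)^N\vol(r,x)$) and then squeeze using $t\log t\to 0$. The only cosmetic difference is in the upper bound, where the paper invokes $\vol(\sqrt{\varepsilon t},x)\in[0,1]$ while you more robustly use monotonicity together with finiteness of the Radon measure on the compact ball $\overline B_{\sqrt\varepsilon}(x)$; this is, if anything, a slightly cleaner formulation of the same elementary step.
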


\begin{proof}
Since $(\X,m,\E,\D)$ is a Harnack type Dirichlet form we know from theorem (\ref{equi}) that the volume doubling property (\ref{VD}) holds, i.e. it exists a constant $N=N(Y)$ such that for all balls $B_{2r}(x)\subset Y$: $\vol(2r,x)\leq 2^N \vol(r,x)$. This implies
$$\vol(r',x)\leq \left(\frac{r'}{r}\right)^N \vol(r,x)$$
for all $B_{r}(x)\subset B_{r'}(x)\subset Y$. Hence we get for all $0<t\leq 1$
$$\vol(\sqrt{\varepsilon t},x) \geq t^{N/2} \vol(\sqrt{\varepsilon},x).$$
Because $\vol(\sqrt{\varepsilon t},x) \in [0,1]$ we see
$$0\geq \lim_{t \searrow 0} t \log \vol(\sqrt{\varepsilon t},x) \geq \lim_{t \searrow 0} \frac{N}{2} t \log t \stackrel{l'Hopital}{=} 0.$$
\end{proof}

\begin{rem}
Analogous
$$\lim_{t \searrow 0} t \log \left(\vol(\sqrt{\varepsilon t},x)\right)^{-1}=0$$
since $\log \left(\vol(\sqrt{\varepsilon t},x)\right)^{-1}= - \log \vol(\sqrt{\varepsilon t},x)\geq 0$.
\end{rem}

\section{Upper and lower bound for the heat kernel}

Let $(\X,m,\E,\D)$ be a Harnack type Dirichlet space. According to theorem (\ref{equi}) this implies the volume doubling property (\ref{VD}) and the Poincar\'{e} inequality (\ref{PI}). Under this assumptions it is possible to derive pointwise estimates for the density of the semigroup $\{T_t\}_{t>0}$ as well as for the fundamental solution $p_t(x,y)$ of the operator $A-\frac{\partial}{\partial_t}$.

For the following proposition see \cite{SII}

\begin{prop}\label{fundsolu}
There exists a measurable function $p : \R\times \X \times \X \to [0,\infty[$ with the following properties:

\begin{itemize}
    \item[(i)] for every $t>0$, $m$--a.e. $x,y \in \X$ and every $f \in L^1(\X,m)+L^\infty(\X,m)$
    $$T_t f(y)=\int_\X p_t(z,y)f(z)m(dz);$$

    \item[(ii)] for every $0< \sigma < \tau$ and $m$--a.e. $x\in \X$ the function
    $$u: (t,y)\mapsto p_t(x,y)$$
    is a global solution of the equation $Au=\frac{\partial}{\partial_t}u$ on $]\sigma,\tau[\times \X$;

    \item[(iii)] for every $0<r<t$ and $m$--a.e. $x,y \in \X$
    $$p_t(x,y)=\int_\X p_r(x,z)p_{t-r}(z,y)m(dz);$$

    \item[(iv)] for every $0<t$
    $$\int_\X\int_\X p_t(x,y)^2m(dx)m(dy)=||T_t||^2\leq 1.$$
\end{itemize}

\end{prop}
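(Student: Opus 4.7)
I would produce the kernel $p_t$ by combining ultracontractivity of the semigroup (coming from the local geometric assumptions of the Harnack framework) with the Dunford-Pettis/Schwartz kernel theorem, and then verify the four properties by routine semigroup manipulations together with a regularity upgrade via the Harnack inequality.

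\textbf{Step 1: ultracontractivity.} By Theorem \ref{equi}, the Harnack type assumption entails (VD) and (PI). A Nash (or Moser) iteration based on these two inequalities yields a Nash type inequality which, when combined with the dissipation identity $\frac{d}{dt}\|T_tf\|_2^2=-2\E(T_tf,T_tf)$, produces an on-diagonal bound $\|T_tf\|_\infty\le C(t)\|f\|_1$. By self-adjointness, $T_{t/2}\colon L^2\to L^\infty$ and $T_{t/2}\colon L^1\to L^2$ are both bounded, so $T_t=T_{t/2}\circ T_{t/2}\colon L^1\to L^\infty$ is bounded.

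\textbf{Step 2: kernel (i) and Chapman-Kolmogorov (iii).} From the $L^1\to L^\infty$ bound, the Dunford-Pettis theorem furnishes a measurable representative $p_t\in L^\infty(\X\times\X,m\otimes m)$ realising $T_t$ as an integral operator; extension from $L^1$ to $L^1+L^\infty$ uses contractivity on $L^\infty$. This gives (i). Self-adjointness of $T_t$ on $L^2(\X,m)$ yields $p_t(x,y)=p_t(y,x)$ for $m\otimes m$-a.e.\ $(x,y)$, and the semigroup identity $T_t=T_rT_{t-r}$, written out with Fubini in terms of kernels, is precisely (iii).

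\textbf{Step 3: heat equation (ii) and $L^2$-identity (iv).} For (ii), fix $x$ outside an exceptional null set so that $p_{\sigma/2}(x,\cdot)\in L^2(\X,m)$. Via (iii), $p_t(x,\cdot)=T_{t-\sigma/2}p_{\sigma/2}(x,\cdot)$, which for $t>\sigma$ lies in every power of the domain of $A$; analyticity of the self-adjoint semigroup gives $\partial_t p_t(x,\cdot)=A\,p_t(x,\cdot)$ as an $L^2$-identity, and testing against $\varphi\in\D_c(]\sigma,\tau[\times\X)$ yields the weak formulation in the definition of a local solution. For (iv), combine (iii), symmetry, and Fubini:
\[ \int_\X\!\!\int_\X p_t(x,y)^2\,m(dx)\,m(dy)=\int_\X p_{2t}(x,x)\,m(dx), \]
which is the Hilbert-Schmidt norm $\|T_t\|_{HS}^2$ of the self-adjoint operator $T_t$; the bound by $1$ is inherited from the Markovian contraction property $\|T_t\|_{L^2\to L^2}\le 1$.

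\textbf{Main obstacle.} The genuinely non-trivial ingredient is Step~1, the passage from the local data (VD)+(PI) to the global ultracontractivity of $T_t$. This is where the geometric assumptions actually enter; a clean derivation requires either cut-off functions and a Moser iteration to localise on balls, or the proof of a global Nash inequality from the scale-invariant Poincar\'e inequality and volume doubling. Once ultracontractivity is in hand, the remaining parts of the proposition reduce to standard semigroup algebra plus Fubini, and the jointly continuous version of $p_t$ needed to read the pointwise statements is supplied by the Harnack inequality itself.
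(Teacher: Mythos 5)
The paper gives no proof of Proposition~\ref{fundsolu}; it is cited from \cite{SII}, whose argument is genuinely \emph{local}, not global as in your Step~1. Your route through a global Nash inequality and global ultracontractivity $T_t\colon L^1\to L^\infty$ is only available when the doubling and Poincar\'e inequalities hold \emph{at all scales on all of~$\X$} (i.e.\ $Y=\X$). In the framework of this paper, (VD) and (PI) are assumed only for balls $B_{2r}(x)\subset Y$ with $Y\subset\X$ an arbitrary fixed subset, so no scale-invariant global Nash inequality need hold, and the operator $T_t$ need not map $L^1(\X,m)$ into $L^\infty(\X,m)$ at all. What \cite{SII} actually uses is the local Sobolev inequality (obtained from (VD)+(PI) on balls), fed into a Moser iteration that yields a parabolic mean-value estimate for weak solutions: $\esss_{V'}|T_tf|\le C(V',V,t)\,\norm{f}_{L^2}$ for $V'\subset\subset V$. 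That already gives $T_t\colon L^2\to L^\infty_{\mathrm{loc}}$, and the kernel is then produced by the Riesz representation theorem applied to the continuous linear functionals $f\mapsto T_tf(y)$ for $m$-a.e.\ $y$, \emph{without} ever invoking global $L^1\to L^\infty$ boundedness or the Dunford--Pettis theorem. The construction is carried out point by point and the extension from $L^2$ to $L^1+L^\infty$ is done afterwards by monotone approximation.

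Your Steps 2--3 are then fine modulo the remark that in this local construction the a priori regularity is $L^\infty_{\mathrm{loc}}$ rather than $L^\infty$, and the Chapman--Kolmogorov identity (iii) is derived by writing $T_t=T_rT_{t-r}$ tested against $L^2\cap L^\infty_c$ functions, using the symmetry $p_t(x,y)=p_t(y,x)$. One more caveat on (iv): the identity $\int_\X\int_\X p_t(x,y)^2\,m(dx)\,m(dy)=\int_\X p_{2t}(x,x)\,m(dx)$ is correct formally, but this quantity is the squared Hilbert--Schmidt norm of $T_t$, which is typically \emph{infinite} (e.g.\ on $\R^n$ with Lebesgue measure); the useful and always-true inequality is the spectral theorem bound $\norm{T_t}_{L^2\to L^2}\le 1$, and the correct reading of (iv) in \cite{SII} is the local version $\int_V\int_V p_t^2\,dm\,dm<\infty$ for relatively compact $V$. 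So the part of your argument needing repair is precisely the one you flagged as ``the genuinely non-trivial ingredient'': it cannot be global ultracontractivity in this framework, it must be the local parabolic $L^2\to L^\infty$ smoothing of Moser iteration.
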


\begin{rem}
If $(\X,m,\E,\D)$ is a Harnack type Dirichlet space then $\{T_t\}_{t>0}$ -- as a solution of $(A-\frac{\partial}{\partial_t})u=0$ -- satisfies the Harnack inequality (\ref{HI}). From proposition (\ref{fundsolu}) (ii) it follows that also $p_t(x,y)$ satisfies the Harnack inequality on $]\sigma,\tau[\times \X$ for $m$--a.e. $x\in \X$ and all $0<\sigma<\tau$, i.e. for all $t>0$ and $B_{2r}(z)\subset Y$ it exists a constant $C=C(Y)$ such that
$$\sup_{(s,y)\in Q^-}p_s(x,y) \leq C \inf_{(s,y)\in Q^+}p_s(x,y)$$
if $]t-4r^2,t[\subset ]\sigma,\tau[$. Here $Q^-=]t-3r^2,t-2r^2[\times B_r(z)$ and $Q^+=]t-r^2,t[\times B_r(z)$.
\end{rem}

For another useful theorem see \cite{SIII}.

\begin{thm}\label{thmIII}
Let $(\X,m,\E,\D)$ a Harnack type Dirichlet space (for $Y \subset\X$). Then there exists a constant such that

$$p_t(x,y) \geq \frac{1}{C} \cdot \vol(\sqrt{s},x)^{-1}\cdot \exp\left(-C\frac{d^2(x,y)}{t}\right)\cdot \exp\left(-t\frac{C}{R^2}\right)$$

for $t>0$ and all points $x,y \in Y$ which are joined in Y by a curve $\gamma$ of length $d(x,y)$. Here $s=\inf\{t,R^2\}$ with $R=\inf_{0\leq l \leq 1}d(\gamma(l),\X\backslash Y)$ (being $+\infty$ if $\X=Y$).

\end{thm}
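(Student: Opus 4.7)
My plan is the classical chain-of-balls proof of the Gaussian lower bound in the Saloff-Coste/Sturm style, which reduces the theorem to (a) a near-diagonal lower bound for $p_t$ and (b) iterated use of the Chapman--Kolmogorov identity from Proposition \ref{fundsolu}(iii). Throughout I use Theorem \ref{equi} to import (\ref{VD}) on $Y$ and the Harnack reformulation for $p_\cdot(x,\cdot)$ from the remark following Proposition \ref{fundsolu}.

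First I establish an on-diagonal lower bound $p_t(x,x) \ge c_1/\vol(\sqrt t, x)$ for $0<t\le R^2$ and $x\in Y$. By symmetry and Proposition \ref{fundsolu}(iii), $p_{2t}(x,x)=\int_\X p_t(x,z)^2\,dm(z)$, so Cauchy--Schwarz yields
$$p_{2t}(x,x) \ge \frac{1}{\vol(\sqrt t, x)}\left(\int_{B_{\sqrt t}(x)} p_t(x,z)\, dm(z)\right)^{\!2}.$$
It therefore suffices to bound the survival mass in the right-hand side below by a universal constant, which is the one genuinely delicate step (see below). Granted this, a spatial Harnack step applied to $y\mapsto p_t(x,y)$ in the cylinder $]\,t/2,\,2t[\,\times B_{\sqrt t}(x)$ upgrades the estimate to a near-diagonal one,
$$p_t(x,y) \ge \frac{c_2}{\vol(\sqrt t, x)} \qquad\text{whenever } d(x,y) \le \eta\sqrt t \text{ and } t \le R^2,$$
for a fixed $\eta\in(0,1)$ depending only on the Harnack constant.

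Next I chain these near-diagonal bounds along $\gamma$. Set $s=\min(t,R^2)$ and $N=\max\bigl(\lceil d(x,y)^2/(\eta^2 s)\rceil,\lceil t/s\rceil\bigr)$, and pick points $x_0=x,x_1,\ldots,x_N=y$ on $\gamma$ with $d(x_{i-1},x_i)\le d(x,y)/N$. Then $d(x_{i-1},x_i)\le \eta\sqrt{t/N}$ and $t/N\le R^2$, so the near-diagonal bound applies at every link. Iterating Proposition \ref{fundsolu}(iii) and integrating each intermediate point over $B_{(\eta/3)\sqrt{t/N}}(x_i)$ produces
$$p_t(x,y) \ge c_2^N\,\prod_{i=1}^{N-1}\frac{\vol((\eta/3)\sqrt{t/N},x_i)}{\vol(\sqrt{t/N},x_i)}\cdot\frac{1}{\vol(\sqrt{t/N},x_N)}.$$
Volume doubling bounds each of the $N-1$ ratios below by a universal constant, and a further application of doubling (comparing $\sqrt{t/N}$ to $\sqrt s$ and the base-point $x_N$ to $x$, with the polynomial overhead absorbed into $C^N$) yields $\vol(\sqrt{t/N},x_N)^{-1}\ge c^{-N}\vol(\sqrt s,x)^{-1}$. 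Substituting $N\asymp d(x,y)^2/t+t/R^2$ produces the claimed factors $\exp(-Cd(x,y)^2/t)$ and $\exp(-Ct/R^2)$.

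\textbf{Main obstacle.} The only serious step is the survival bound $\int_{B_{\sqrt t}(x)}p_t(x,z)\,dm(z)\ge\alpha>0$ for $t\le R^2$. Since the semigroup need not be conservative on $\X$ and the constant must be quantitative in $R$, this requires a Davies--Gaffney cutoff inside $B_R(x)$; the integrated decay of the truncated heat flow across a ball of radius $R$ is precisely what forces the extra $\exp(-Ct/R^2)$ when $t>R^2$. Ensuring that every constant depends only on the Harnack/doubling/Poincar\'e data on $Y$, so the bound is uniform in $x,y,t$, is what occupies most of the technical work.
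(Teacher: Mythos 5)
The paper gives no proof of Theorem~\ref{thmIII}: it simply imports the bound from the reference \cite{SIII} (Sturm, Analysis on local Dirichlet spaces III), so there is no internal argument for your proposal to be measured against. Taken on its own merits, your road map --- on-diagonal lower bound via Cauchy--Schwarz applied to Proposition~\ref{fundsolu}(iii), upgrade to a near-diagonal bound by a spatial Harnack step, then chain along the geodesic $\gamma$ with volume doubling absorbing the nuisance ratios and producing the $\exp(-Cd^2/t)\exp(-Ct/R^2)$ factors through $N\asymp d^2/t + t/R^2$ --- is precisely the shape of the argument in \cite{SIII} and the Saloff-Coste/Grigor'yan literature, so the approach is correct in outline.

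However, the survival estimate $\int_{B_{\sqrt t}(x)}p_t(x,z)\,dm(z)\ge\alpha>0$ for $t\le R^2$, which you correctly identify as the load-bearing step, is not actually proved; and it is not a routine omission. In the present setting the semigroup need not be conservative and everything is localized to $Y\subsetneq\X$, so a positive lower bound on the mass of $p_t(x,\cdot)$ within $B_{\sqrt t}(x)$ does not come for free. The standard route is through the Dirichlet heat kernel of $B_R(x)$, using a Faber--Krahn (or spectral-gap) inequality, itself supplied by (\ref{VD}) and (\ref{PI}), to control the exit probability $\Prob_x(\tau_{B_R}\le t)$ and then a Davies--Gaffney-type estimate to confine the remaining mass near $x$; a Davies--Gaffney cutoff alone gives upper bounds on off-diagonal mass, not the lower bound on survival you still need. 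Until this step is worked out with constants depending only on the Harnack data on $Y$, the argument has a genuine hole at its base. There is also a minor bookkeeping slip in the chaining: with $d(x_{i-1},x_i)\le\eta\sqrt{t/N}$ and integration balls of radius $(\eta/3)\sqrt{t/N}$, consecutive integration points can be as far apart as $(5\eta/3)\sqrt{t/N}$, which exceeds the reach $\eta\sqrt{t/N}$ of your near-diagonal bound; either require $d(x_{i-1},x_i)\le(\eta/3)\sqrt{t/N}$ (i.e.\ take $N$ a factor $9$ larger) or shrink the integration balls. This is easily repaired but needs to be stated; the survival estimate is the genuine gap.
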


\subsection{Upper bound}

In this section we want to prove (pointwise) upper bounds for $T_t \eins_A(x)$ and $p_t(x,y)$. For it we use the integrated Gaussian estimates
\begin{equation}\label{gaussian}
(\eins_A,T_t\eins_B)_{L^2} \leq \sqrt{m(A)m(B)} \exp\left[-\frac{d^2(A,B)}{2t}\right],
\end{equation}
with $t>0$ and $A,B \subset \X$ measurable subsets. For references see for example \cite{SII} and \cite{HR}. Together with the Harnack inequality and proposition (\ref{LDPVD}) we can deduce the following lemma:

\begin{lemma}
The following pointwise estimates hold

\begin{itemize}
    \item[(i)] for all measurable sets $A\subset \X$ and $m$--a.e. $x \in \X$
    $$\limsup_{t \searrow 0}t \log T_t\eins_A(x) \leq - \frac{d^2(A,x)}{2};$$

    \item[(ii)] for $m$--a.e. $x,y \in \X$
    $$\limsup_{t \searrow 0}t \log p_t(x,y) \leq - \frac{d^2(x,y)}{2}.$$
\end{itemize}
\end{lemma}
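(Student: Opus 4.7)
The plan is to combine the parabolic Harnack inequality with the integrated Gaussian estimate (\ref{gaussian}) to pass from an $L^2$ bound to a pointwise bound, and then to use lemma \ref{LDPVD} to show that the volume factors generated along the way are negligible on the logarithmic scale.

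For (i), fix $(t,x)$ and consider $u(s,y) = T_s \eins_A(y)$, a nonnegative weak solution of the heat equation on $]0,\infty[\times \X$. Choose $r > 0$ (eventually $r = \sqrt{\varepsilon t}$) and apply (\ref{HI}) on the cylinder $Q = ]t - \tfrac{3}{2}r^2, t + \tfrac{5}{2}r^2[ \times B_{2r}(x)$, so that $(t,x) \in Q^- = ]t - \tfrac{1}{2}r^2, t + \tfrac{1}{2}r^2[ \times B_r(x)$ while $Q^+ = ]t + \tfrac{3}{2}r^2, t + \tfrac{5}{2}r^2[ \times B_r(x)$. Bounding the resulting infimum over $Q^+$ by its average and then invoking (\ref{gaussian}) on the pairing $(\eins_{B_r(x)}, T_s \eins_A)_{L^2}$ gives
$$T_t \eins_A(x) \leq \frac{C}{m(B_r(x))\, r^2} \int_{t+3r^2/2}^{t+5r^2/2} \sqrt{m(A)\, m(B_r(x))}\, \exp\!\left(-\frac{d^2(A, B_r(x))}{2s}\right) ds.$$
Set $r = \sqrt{\varepsilon t}$, take the logarithm, multiply by $t$, and send $t \searrow 0$: lemma \ref{LDPVD} and its reciprocal companion kill the $\tfrac{t}{2} \log m(B_r(x))^{\pm 1}$ terms, $d(A, B_r(x)) \to d(A,x)$, and $t/(t + \tfrac{5}{2} r^2) \to (1 + \tfrac{5}{2} \varepsilon)^{-1}$. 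Letting $\varepsilon \searrow 0$ then delivers the claim.

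For (ii), apply the same scheme twice. Use Harnack in the $y$-variable on $(s,y) \mapsto p_s(x,y)$ at $(t,y)$ to get $p_t(x,y) \leq C\, p_{s_1}(x, y_1)$ for $(s_1, y_1)$ in a future cylinder of base radius $r$ centered at $y$. By the symmetry $p_{s_1}(x, y_1) = p_{s_1}(y_1, x)$, a second Harnack application to the solution $(s, x') \mapsto p_s(y_1, x')$ at $(s_1, x)$, shifting time by another order $r^2$, yields $p_t(x,y) \leq C^2\, p_{s_2}(y_1, x_1)$ for $(s_2, x_1)$ in a future cylinder of base radius $r$ centered at $x$. Averaging both Harnack bounds over their respective $(s_1, y_1)$ and $(s_2, x_1)$ cylinders produces
$$p_t(x,y) \leq \frac{C^2}{m(B_r(x))\, m(B_r(y))\, r^4} \iint (\eins_{B_r(x)}, T_{s_2}\eins_{B_r(y)})_{L^2}\, ds_1\, ds_2,$$
where each time interval has length of order $r^2$ and $s_j = t + O(r^2)$. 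Inserting (\ref{gaussian}) and repeating the logarithmic argument of (i), now using lemma \ref{LDPVD} at both $x$ and $y$, followed by $\varepsilon \searrow 0$, yields (ii).

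The cylinder bookkeeping is routine; the actual obstacle is (i) when $m(A) = \infty$, since then $\tfrac{t}{2}\log m(A)$ is not a priori negligible. My fix is to decompose $A$ into annular shells $A_k = A \cap (B_{R_{k+1}}(x) \setminus B_{R_k}(x))$: the doubling property (\ref{VD}) controls each $m(A_k)$ polynomially in $R_k$, while the Gaussian factor $\exp(-R_k^2/(2t))$ dominates this growth for $R_k \gg \sqrt{t}$, so the tail sum is negligible on the scale $t\log$ uniformly in small $t$. The $m$-a.e.\ caveats from proposition \ref{fundsolu}(ii) cause no trouble once one uses the continuous versions of $p_t$ and $T_t\eins_A$ provided by the remark after (\ref{HI}).
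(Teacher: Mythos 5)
Your proof follows the same scheme as the paper: apply the parabolic Harnack inequality to push the pointwise value of $T_t\eins_A$ (resp.\ $p_t(x,\cdot)$) forward in time to a ball average, identify that average as the $L^2$ pairing $(\eins_{B_r(x)},T_s\eins_A)$, invoke the integrated Gaussian estimate (\ref{gaussian}), and then use lemma \ref{LDPVD} and its reciprocal remark to kill the volume factors under $t\log$. The only cosmetic difference is your cylinder placement and the fact that you average over the whole spacetime box $Q^+$, while the paper fixes a single later time $t(1+2\varepsilon)$ and averages over the ball alone; both are equivalent. For (ii) you use the symmetry $p_{s}(x,z)=p_s(z,x)$ to apply Harnack in each spatial variable, which is exactly what the paper's double application does. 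Your closing observation about $m(A)=\infty$ is a genuine point that the paper simply passes over: its displayed inequality ends with a factor $\sqrt{m[A]}$, which is vacuous when $m(A)=\infty$, so the paper's argument as written only covers sets of finite measure, and the annular decomposition you sketch (using global (\ref{VD}) to bound $m(A_k)$ polynomially against the Gaussian factor $\exp(-R_k^2/2t)$) is the standard way to patch this when $Y=\X$. In short, your proposal is correct and is the paper's argument with the loose end about infinite-measure $A$ tied off.
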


\begin{proof}

\begin{eqnarray*}
T_t \eins_A(x) &\leq & \sup_{y\in B_{\sqrt{\varepsilon t}}(x)} T_t\eins_A(y) \ \stackrel{HI}{\leq}\ C \inf_{y\in B_{\sqrt{\varepsilon t}}(x)}T_{t(1+2\varepsilon)}\eins_A(y)\\
&\leq& C \cdot \vol(\sqrt{\varepsilon t},x)^{-1}\int_{B_{\sqrt{\varepsilon t}}(x)} T_{t(1+2\varepsilon)} \eins_A (z)m(dz)\\
&=& C \cdot \vol(\sqrt{\varepsilon t},x)^{-1} \left(\eins_{B_{\sqrt{\varepsilon t}}(x)}, T_{t(1+2\varepsilon)}\eins_A\right)_{L^2} \\
&\leq&C \cdot t^{-N/2} \cdot \vol(\sqrt{\varepsilon},x)^{-1} \cdot \sqrt{m[A] m[B_{\sqrt{\varepsilon t}}(x)]}\exp\left[-\frac{d^2(A,B_{\sqrt{\varepsilon t}}(x))}{2t(1+2\varepsilon)}\right]\\
&\leq&C \cdot t^{-N/2} \cdot \vol(\sqrt{\varepsilon},x)^{-1/2} \cdot \sqrt{m[A]}\exp\left[-\frac{d^2(A,B_{\sqrt{\varepsilon t}}(x))}{2t(1+2\varepsilon)}\right]
\end{eqnarray*}

So for all $\varepsilon >0$ we get ($d$ continuous) with lemma \ref{LDPVD}

$$\limsup_{t \searrow 0}t \log T_t \eins_A(x)\leq-\frac{d^2(A,x)}{2(1+2\varepsilon)},$$

and (i) follows for $\varepsilon \searrow 0$.

For part (ii) we apply the Harnack inequality twice

\begin{eqnarray*}
p_t(x,y)&\leq& C \cdot \vol(\sqrt{\varepsilon t},y)^{-1} \underbrace{\int_{B_{\sqrt{\varepsilon t}}(y)} p_{t(1+2\varepsilon)}(x,z)m(dz)}_{=T_{t(1+2\varepsilon)}\eins_{B_{\varepsilon t}}(x)}\\
&\leq &C^2 \cdot \vol(\sqrt{\varepsilon t},x)^{-1} \cdot \vol(\sqrt{\varepsilon t},y)^{-1}\cdot \int_{B_{\sqrt{\varepsilon t}}(x)} T_{t(1+4\varepsilon)} \eins_{B_{\sqrt{\varepsilon t}}(y)} (z)m(dz)\\
&\leq& C^2 \cdot t^{-N} \cdot \vol(B_{\sqrt{\varepsilon}}(x))^{-1/2} \cdot \vol(B_{\sqrt{\varepsilon}}(x))^{-1/2} \cdot \exp\left[-\frac{d^2(B_{\sqrt{\varepsilon t}}(x),B_{\sqrt{\varepsilon t}}(y))}{2t(1+4\varepsilon)}\right]
\end{eqnarray*}

As before for $\varepsilon >0$ we get

$$\limsup_{t \searrow 0}t \log p_t(x,y)\leq-\frac{d^2(x,y)}{2(1+4\varepsilon)},$$

and hence (ii).

\end{proof}

\subsection{Lower bound}

The goal of this section is to prove the corresponding lower bound. Therefor we use theorem (1.1) in \cite{HR}, which states that $\liminf_{t \searrow 0} t \log \left(\eins_A,T_t\eins_B\right)_{L^2} \geq -\frac{d^2(A,B)}{2}$. Additionally we use theorem (\ref{thmIII}) to show the following lemma.
\begin{lemma}
The following pointwise estimates hold:
\begin{itemize}
    \item[(i)] for all measurable sets $A\subset \X$ and $m$--a.e. $x\in \X$
    \begin{equation}\label{LDPTt}
    \liminf_{t\searrow 0} t \log T_t \eins _A(x) \geq - \frac{d^2(A,x)}{2};
    \end{equation}

    \item[(ii)] for $m$--a.e. $x,y \in \X$
    \begin{equation}\label{pt}
    \liminf_{t\searrow 0} t \log p_t(x,y) \geq - \frac{d^2(x,y)}{2}.
    \end{equation}
\end{itemize}
\end{lemma}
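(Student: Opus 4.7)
The strategy mirrors the proof of the upper bound with inequalities reversed: use the parabolic Harnack inequality to dominate the pointwise value $T_t\eins_A(x)$ (respectively $p_t(x,y)$) from below by an $L^2$ pairing of indicators of small balls against the semigroup, then invoke the Hino--Ramirez integrated lower bound together with Lemma~\ref{LDPVD} to absorb the volume factors.

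For (i), apply the Harnack inequality to the nonnegative solution $(s,z)\mapsto T_s\eins_A(z)$ on the cylinder $Q=\,]t-4\varepsilon t,t[\,\times B_{2\sqrt{\varepsilon t}}(x)$. Since $(t,x)\in\overline{Q^+}$ and any $(t_0,z)$ with $t_0\in\,]t(1-3\varepsilon),t(1-2\varepsilon)[$ and $z\in B_{\sqrt{\varepsilon t}}(x)$ lies in $Q^-$, we get $T_t\eins_A(x)\geq\frac{1}{C}\,T_{t_0}\eins_A(z)$. Averaging over $z$ yields
$$T_t\eins_A(x)\geq\frac{1}{C\,\vol(\sqrt{\varepsilon t},x)}\bigl(\eins_{B_{\sqrt{\varepsilon t}}(x)},\,T_{t_0}\eins_A\bigr)_{L^2}.$$
For (ii), apply Harnack twice, once in each variable of $p_t(x,y)$ -- using that by Proposition~\ref{fundsolu}(ii) and the symmetry of the kernel both $(s,z)\mapsto p_s(z,y)$ and $(s,w)\mapsto p_s(z,w)$ are solutions -- to obtain
$$p_t(x,y)\geq\frac{1}{C^2\,\vol(\sqrt{\varepsilon t},x)\,\vol(\sqrt{\varepsilon t_0},y)}\bigl(\eins_{B_{\sqrt{\varepsilon t}}(x)},\,T_{t_1}\eins_{B_{\sqrt{\varepsilon t_0}}(y)}\bigr)_{L^2}$$
for some $t_1<t_0<t$ with $t_0\sim t(1-2\varepsilon)$ and $t_1\sim t(1-4\varepsilon)$.

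Feeding the integrated lower bound from \cite{HR} into the $L^2$-pairings on the right, taking $t\log$, and noting that $-t\log C^k\to0$, that $-t\log\vol(\sqrt{\varepsilon t},\cdot)\to 0$ by Lemma~\ref{LDPVD}, and that $d(A,B_{\sqrt{\varepsilon t}}(x))\to d(A,x)$ (resp.\ $d(B_{\sqrt{\varepsilon t}}(x),B_{\sqrt{\varepsilon t_0}}(y))\to d(x,y)$) as $t\to 0$, one arrives at
$$\liminf_{t\searrow 0} t\log T_t\eins_A(x)\geq -\frac{d^2(A,x)}{2(1-2\varepsilon)}$$
and the analogous bound for $p_t(x,y)$ with an $O(\varepsilon)$ correction in the denominator. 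Sending $\varepsilon\searrow 0$ then yields (i) and (ii). The principal technical obstacle is that Theorem~1.1 of \cite{HR} is stated for \emph{fixed} measurable sets, whereas the integration sets $B_{\sqrt{\varepsilon t}}(\cdot)$ shrink with $t$; one must either verify that the Hino--Ramirez liminf remains valid in this coupled-scaling regime or use Theorem~\ref{thmIII} as an auxiliary device to control the shrinking-ball integration without losing the sharp constant $1/2$ in the exponent. Once this point is settled, the remaining steps are routine.
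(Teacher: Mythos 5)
The key difficulty you identify at the end is not a peripheral ``technical obstacle to be settled''; it is the entire content of the proof, and your Harnack-based route does not get around it. When you reverse the upper-bound argument via the parabolic Harnack inequality you are forced to average over balls of radius $O(\sqrt{\varepsilon t})$, because the Harnack cylinder couples spatial radius $r$ to temporal increment $\sim r^2$; a fixed spatial radius would require $t \gtrsim r^2$, which is incompatible with $t\searrow 0$. Consequently the $L^2$-pairing you obtain on the right-hand side always carries a test set $B_{\sqrt{\varepsilon t}}(x)$ that shrinks with $t$, and the Hino--Ramirez asymptotic $\liminf_{t\searrow 0}t\log(\eins_A,T_t\eins_B)_{L^2}\geq -d^2(A,B)/2$ simply does not apply to $t$-dependent sets. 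Invoking it anyway is the gap: nothing in the proposal bridges it, and the two escape routes you mention are not worked out.

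The paper proceeds by a genuinely different decomposition that avoids the shrinking ball altogether. Instead of Harnack, it uses Chapman--Kolmogorov to factor $T_t\eins_A(x)=\int_A p_t(z,x)\,m(dz)\geq\int_A\int_{B_\varepsilon(x)}p_{t(1-\varepsilon)}(z,z')\,p_{\varepsilon t}(z',x)\,m(dz')\,m(dz)$ over a ball $B_\varepsilon(x)$ of \emph{fixed} radius $\varepsilon$ (independent of $t$). The small-time, small-distance factor $p_{\varepsilon t}(z',x)$ for $z'\in B_\varepsilon(x)$ is then bounded below by the Gaussian estimate in Theorem~\ref{thmIII}, which produces $\vol(\sqrt{\varepsilon t},x)^{-1}\exp(-C\varepsilon/t)$; the volume factor is killed by Lemma~\ref{LDPVD} and the exponential factor contributes only an additive $-C\varepsilon$ to the liminf. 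What remains is $(\eins_A,T_{t(1-\varepsilon)}\eins_{B_\varepsilon(x)})_{L^2}$ with \emph{fixed} sets, to which the Hino--Ramirez lower bound applies legitimately, giving $-d^2(A,B_\varepsilon(x))/(2(1-\varepsilon))$. Sending $\varepsilon\searrow 0$ at the very end closes the argument. In short, Theorem~\ref{thmIII} is not an ``auxiliary device'' patching a Harnack step; it replaces Harnack entirely as the primary tool, precisely because it is what lets one keep the Hino--Ramirez test set fixed. Your proposal, as written, does not contain a valid proof.
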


\begin{proof}
(i)

\begin{eqnarray}\label{Ttunten1}
T_t\eins_A (x) &= & \int_A p_t(z,x) m(dz)\nonumber\\
&\geq& \int_A \int_{B_{\varepsilon}(x)} p_{t(1-\varepsilon)}(z,z')p_{t\varepsilon}(z',x)m(dz')m(dz).
\end{eqnarray}

Now we can use theorem (\ref{thmIII}) to estimate $p_{t\varepsilon}(z',x)$. If we choose $\varepsilon$ small enough we know that $\overline{B}_{\varepsilon}$ is compact and thus complete. Then lemma (1.2) in \cite{SIII} tells us that every $z'\in \overline{B}_{\varepsilon}$ can be joined with the center $x$ by a minimal geodesic in $\overline{B}_{\varepsilon}$. Hence the requirements of theorem (\ref{thmIII}) are fulfilled and we get for all $z' \in B_{\varepsilon}$

\begin{eqnarray}\label{pepsilont}
p_{\varepsilon t}(z',x)&\geq& \frac{1}{C}\cdot \vol(\sqrt{\varepsilon t},x)^{-1}\cdot \exp\left(- C\frac{d^2(x,z')}{\varepsilon t}\right)\nonumber\\
&\geq& \frac{1}{C}\cdot \vol(\sqrt{\varepsilon t},x)^{-1}\cdot \exp\left(- C\frac{\varepsilon}{t}\right).
\end{eqnarray}

Thus together with \ref{Ttunten1} we get

\begin{eqnarray}
\lefteqn{\liminf_{t\searrow 0} t \log T_t\eins_A (x) \geq \int_A \int_{B_{\varepsilon}(x)} p_{t(1-\varepsilon)}(z,z')p_{t\varepsilon}(z',x)m(dz')m(dz)}\nonumber \\
& \geq & \liminf_{t\searrow 0} t \log \left[\frac{1}{C}\cdot \vol(\sqrt{\varepsilon t},x)^{-1}\cdot \exp\left(- C\frac{\varepsilon}{t}\right) \int_A \int_{B_{\varepsilon}(x)} p_{t(1-\varepsilon)}(z,z')m(dz')m(dz)\right]\nonumber\\
& \geq & \liminf_{t\searrow 0} t \log \left[\frac{1}{C}\cdot \vol(\sqrt{\varepsilon},x)^{-1}\cdot \exp\left(- C\frac{\varepsilon}{t}\right) \left(\eins_A,T_{t(1-\varepsilon)}\eins_{B_{\varepsilon}(x)}\right)_{L^2}\right]\nonumber\\
&\geq& -\frac{d^2(A,B_{\varepsilon}(x))}{2(1-\varepsilon)} - C\varepsilon.
\end{eqnarray}

Since $d$ is continuous $-\frac{d^2(A,B_{\varepsilon}(x))}{2(1-\varepsilon)}$ converges to $-\frac{d^2(A,x)}{2}$ as $\varepsilon$ tends to zero we obtain

\begin{equation}
\liminf_{t\searrow 0} t \log T_t \eins_A(x)\geq -\frac{d^2(A,x)}{2}.
\end{equation}

(ii) From proposition (\ref{fundsolu}) we know

\begin{eqnarray}\label{propiii}
p_t(x,y) &=& p_{t(1-\varepsilon)+t\varepsilon}(x,y)\nonumber\\
&=&\int_\X p_{t(1-\varepsilon)}(z,y)p_{t\varepsilon}(x,z)m(dz)\nonumber\\
&\geq& \int_{B_{\varepsilon}(x)} p_{t(1-\varepsilon)}(z,y)p_{t\varepsilon}(x,z)m(dz).
\end{eqnarray}

Like in the proof of part (i) (cf. (\ref{pepsilont})) we have the following estimate
\begin{eqnarray*}
p_{\varepsilon t}(x,z)&\geq& C \cdot (\vol(\sqrt{\varepsilon t},x))^{-1} \cdot \exp\left[-C \frac{d^2(x,y)}{\varepsilon t}\right]\\
&\geq& C \cdot (\vol(\sqrt{\varepsilon t},x))^{-1} \cdot \exp\left[-C \frac{\varepsilon}{t}\right].
\end{eqnarray*}

Together with (\ref{propiii}) we obtain

\begin{eqnarray*}
p_{t}(x,y)&\geq& \int_{B_{\varepsilon}(x)} p_{t(1-\varepsilon)}(z,y)p_{t\varepsilon}(x,z)m(dz)\\
&\geq& C \cdot (\vol(\sqrt{\varepsilon t},x))^{-1} \cdot T_{t(1-\varepsilon)} \eins_{B_\varepsilon(x)}(y) \cdot \exp\left[-C \frac{\varepsilon}{t}\right].
\end{eqnarray*}

Applying part (i) of this lemma and lemma (\ref{LDPVD}) we get

\begin{eqnarray*}
\liminf_{t \searrow 0} t \log p_t(x,y)&\geq& \liminf_{t \searrow 0} t \log \left[T_{t(1-\varepsilon)} \eins_{B_\varepsilon(x)}(y)\right]+ \liminf_{t \searrow 0} t \log \left[\exp\left[-C \frac{\varepsilon}{t}\right]\right]\\
&=& -\frac{d^2(B_\varepsilon (x),y)}{2(1-\varepsilon)} - C \cdot \varepsilon.
\end{eqnarray*}

This holds for all $\varepsilon >0$ and thus we get for $\varepsilon \searrow 0$

$$\liminf_{t \searrow 0} t \log p_t(x,y) \geq -\frac{d^2(x,y)}{2}.$$

\end{proof}

\section{Upper and lower bound for finite dimensional distribution of the associated Markov process}

In this section we analyse the short time behaviour of the finite dimensional distributions of the Markov process associated to a Harnack type Dirichlet space $(\X,m,\E,\D)$. For this we use the pointwise estimates of the last section. The goal is to proof the following theorem

\begin{thm}\label{endldim}
Let $(\X,m,\E,\D)$ be a Harnack type Dirichlet space. Let $X_t$ be the Markov process associated to our Dirichlet form $\E$ on the probability space $(\Omega,\Prob),\ X : \Omega \to \C([0,1],X)$. For all partitions $\Delta^n=\{0=t_0<t_1<\ldots<t_n=1\}$ of the unit interval $[0,1]$ and for all $\mathcal{A}=(A_0,A_1,\ldots,A_n)\in \X^{n+1}$ we get
\begin{itemize}

\item[(i)]
    \begin{equation*}
    \liminf_{s \searrow 0} s \log \Prob\left(X_{s\cdot t_0} \in A_0,X_{s\cdot t_1} \in A_1,\ldots,X_{s\cdot t_n}\in A_n\right)\geq \sup_{\overline{x} \in \mathcal{A}^{0}}{}-{}\sum_{i=0}^{n-1}\frac{d^2(\overline{x}_{t_i},\overline{x}_{i+1})}{2(t_{i+1}-t_i)} \end{equation*}

\item[(ii)]
    \begin{equation*}
    \limsup_{s \searrow 0} s \log \Prob\left(X_{s\cdot t_0} \in A_0,X_{s\cdot t_1} \in A_1,\ldots,X_{s\cdot t_n}\in A_n\right)\leq \sup_{\overline{x} \in \overline{\mathcal{A}}}{}-{}\sum_{i=0}^{n-1}\frac{d^2(\overline{x}_{t_i},\overline{x}_{i+1})}{2(t_{i+1}-t_i)} \end{equation*}

\end{itemize}
\end{thm}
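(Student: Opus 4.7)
The plan is to use the Markov decomposition
\begin{equation*}
\Prob(X_{s t_i} \in A_i,\,i=0,\ldots,n) = \int_{A_0 \times \cdots \times A_n} \mu_0(dx_0)\prod_{i=0}^{n-1} p_{s\Delta t_i}(x_i, x_{i+1})\, m(dx_{i+1}),
\end{equation*}
where $\Delta t_i = t_{i+1}-t_i$ and $\mu_0$ is the law of $X_0$, and to feed the pointwise heat-kernel estimates from Section 2 into each of the $n$ kernel factors.

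For part (ii) I would use the quantitative Gaussian upper bound for $p_t$ that is extracted inside the proof of the upper-bound lemma, namely, for any $\varepsilon>0$ and $m$-a.e.\ $x,y$,
\begin{equation*}
p_t(x,y) \leq C\,t^{-N}\vol(\sqrt{\varepsilon},x)^{-1/2}\vol(\sqrt{\varepsilon},y)^{-1/2}\exp\left(-\frac{d^2(B_{\sqrt{\varepsilon t}}(x),B_{\sqrt{\varepsilon t}}(y))}{2t(1+4\varepsilon)}\right),
\end{equation*}
applied to each of the $n$ factors. After taking $s\log$, Lemma \ref{LDPVD} turns the multiplicative prefactors into $o(1)$, while the resulting product of exponentials can be bounded above by its supremum over the closed product $\overline{\mathcal{A}}$. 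The total mass $\int\mu_0(dx_0)\prod m(A_i)$ is a finite constant contributing $0$ after $s\log$, so passing to $\limsup_{s\searrow 0}$, then sending $\varepsilon\searrow 0$, and using the continuity of $d$ to replace $B_{\sqrt{\varepsilon s}}$ by its centre, delivers (ii).

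For part (i) I would fix $\overline{x}\in\mathcal{A}^{0}$ and choose $\eta>0$ small enough that $B_\eta(\overline{x}_i)\subset A_i$ for every $i$. Restricting the integration in the Markov decomposition to $\prod_i B_\eta(\overline{x}_i)$ and unfolding from the right, the innermost factor is $T_{s\Delta t_{n-1}}\eins_{B_\eta(\overline{x}_n)}(x_{n-1})$, for which the lower-bound lemma provides the pointwise estimate $\liminf s\log(\cdot)\geq -d^2(B_\eta(\overline{x}_n),x_{n-1})/(2\Delta t_{n-1})$. Using the Harnack inequality to promote this pointwise bound to one holding uniformly for $x_{n-1}\in B_\eta(\overline{x}_{n-1})$ up to a subexponential prefactor, and then integrating against the next transition kernel, reduces the problem to an analogous one with $n-1$ steps. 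Iterating produces
\begin{equation*}
\liminf_{s\searrow 0} s \log \Prob(X_{s t_i}\in A_i,\forall i) \geq -\sum_{i=0}^{n-1}\frac{d^2(B_\eta(\overline{x}_i),B_\eta(\overline{x}_{i+1}))}{2\Delta t_i},
\end{equation*}
and taking $\eta\searrow 0$ (using continuity of $d$) and then $\sup_{\overline{x}\in\mathcal{A}^{0}}$ completes (i).

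The main obstacle will be the uniform-propagation step in the lower bound: the pointwise $\liminf$ for $T_s\eins_B(\cdot)$ holds only $m$-a.e.\ in the starting point, so one has to bootstrap it, via Harnack chaining, to a bound that is effectively uniform on each small ball $B_\eta(\overline{x}_i)$, and one must control the Harnack constants that accumulate over the $n$ iterations so that they remain subexponential in $1/s$.
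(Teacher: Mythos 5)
Your Markov decomposition and your reliance on the Section-2 heat-kernel estimates are the right starting points, and the upper-bound sketch is close in spirit, but the lower-bound plan contains a structural gap that you flag without resolving. The pointwise estimate $\liminf_{s\to 0}s\log T_{s\delta}\eins_B(\cdot)\geq -d^2(B,\cdot)/(2\delta)$ holds only as a $\liminf$, $m$-a.e.\ in the basepoint; it gives no quantitative bound valid at each fixed small $s$, uniformly over a ball, which is exactly what you need to ``integrate against the next transition kernel'' and iterate from the right. Moreover, the tool you propose for uniformization --- Harnack chaining across a ball of \emph{fixed} radius $\eta$ at time scale $s\delta$ --- produces a constant of order $\exp(c\,\eta^2/(s\delta))$: this is not subexponential in $1/s$; after applying $s\log$ it leaves a genuine additive $O(\eta^2/\delta)$ error in the rate, and over $n$ iterations these errors stack. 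The paper avoids the whole issue by reordering the argument: each intermediate variable $x_i$ is restricted to a ball whose radius $\sqrt{\varepsilon s\,\delta}$ shrinks with $s$, so that a \emph{single} Harnack step (constant $C$, no chaining) suffices to replace the two kernels adjacent to $x_i$ by their values at the centre $\overline{x}_i$, at the price of a time dilation $\delta\mapsto\delta(1\pm 2\varepsilon)$ and a factor $\vol(\sqrt{\varepsilon s\,\delta},\overline{x}_i)$. Iterating over the interior indices collapses all intermediate integrals, giving an honest quantitative inequality $\Prob \ge C^{-2(n-1)}\prod_i\vol(\sqrt{\varepsilon s\,\delta},\overline{x}_i)\cdot T_{s\delta_1(1-2\varepsilon)}\eins_{A_0}(\overline{x}_1)\prod_i p_{s\delta_{i+1}(1-4\varepsilon)}(\overline{x}_i,\overline{x}_{i+1})\cdot T_{s\delta_n(1-2\varepsilon)}\eins_{A_n}(\overline{x}_{n-1})$ valid at every $s$, and \emph{only then} is $\liminf s\log$ taken, using superadditivity of $\liminf$ on the product together with the Section-2 lemmas and Lemma \ref{LDPVD} applied factor by factor. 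This ordering --- quantitative Harnack collapse first, $\liminf$ last --- is what your unfolding-from-the-right scheme is missing.

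A secondary issue in your upper bound: after inserting the Gaussian bound into all $n$ kernel factors and pulling out the supremum of the exponential, you are still left with $\int_{A_0\times\cdots\times A_n}\prod_i\vol(\sqrt{\varepsilon},x_i)^{-1/2}\,\mu_0(dx_0)\prod_j m(dx_j)$, which need not be finite: neither $m(A_i)$ nor $\inf_{A_i}\vol(\sqrt{\varepsilon},\cdot)$ is controlled by the hypotheses. The paper's version of the same collapse leaves only the two end terms $T_{s\delta}\eins_{A_0}$ and $T_{s\delta}\eins_{A_n}$, each trivially $\le 1$ by Markovianity. The paper also does not take a supremum over $\overline{\mathcal{A}}$ directly; it works on the dilated sets $A^{\beta_+}$ so that the Harnack balls around the collapse points fit, and $\overline{\mathcal{A}}$ emerges only in the limit $\beta\searrow 0$.
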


\begin{proof}
(i) Lower bound:\\[0.2ex]
First we define for a subset $A \subset \X$ and $\beta > 0$ the open set

$$A^{\beta_-}:=\{x \in \X : d(A^C,x)> \beta\}.$$

Further let $\delta_i:=t_{i}-t_{i-1}.$

Fix $\beta>0$ then for all $\varepsilon >0,\ \beta \geq \sqrt{\varepsilon}$ and $m$--a.e. $\overline{x}_i \in A^{\beta_-}_i$

\begin{eqnarray*}
\lefteqn{\Prob\left(X_{t_0} \in A_0,X_{t_1} \in A_1,\ldots,X_{t_n} \in A_n\right)}\\
&\geq& \int_{A_0}\int_{B_{\sqrt{\varepsilon t}}(\overline{x}_1)}\int_{A_2}\ldots\int_{A_n}p_{\delta_1}(x_0,x_1)\cdot p_{\delta_2}(x_1,x_2)\cdot \ldots\\
    &&\hspace{7cm} \ldots \cdot p_{\delta_n}(x_{n-1},x_n)m(dx_n)\ldots m(dx_1)m(dx_0)\\
&\geq& \frac{1}{C^2}\vol(\sqrt{\varepsilon t},\overline{x}_1)\int_{A_0}\int_{A_2}\ldots\int_{A_n}p_{\delta_1(1-2\varepsilon)}(x_0,\overline{x}_1)\cdot p_{\delta_2(1-2\varepsilon)}(\overline{x}_1,x_2)\cdot p_{\delta_3}(x_{2},x_3)\cdot \ldots\\
    &&\hspace{7cm} \ldots \cdot p_{\delta_n}(x_{n-1},x_n)m(dx_n)\ldots m(dx_2)m(dx_0)\\
&\vdots&\\
&\geq&\frac{1}{C^{2(n-1)}}\vol(\sqrt{\varepsilon t},\overline{x}_1)\cdot \ldots \cdot \vol(\sqrt{\varepsilon t},\overline{x}_{n-1})\cdot p_{\delta_2(1-4\varepsilon)}(\overline{x}_1,\overline{x}_2)\cdot \ldots \cdot p_{\delta_{n-1}(1-4\varepsilon)}(\overline{x}_{n-2},\overline{x}_{n-1})\\
    &&\hspace{4.5cm}\cdot \underbrace{\int_{A_0} p_{\delta_1(1-2\varepsilon)}(x_0,\overline{x}_1) m(dx_0)}_{=T_{\delta_1(1-2 \varepsilon)}\eins_{A_0}(\overline{x}_1)}\cdot \underbrace{\int_{A_n}p_{\delta_n(1-2\varepsilon)}(\overline{x}_{n-1}x_n)m(dx_n)}_{=T_{\delta_n(1-2 \varepsilon)}\eins_{A_n}(\overline{x}_{n-1})}
\end{eqnarray*}

Since this holds true for $m$--a.e. $\overline{x}=(\overline{x}_1,\overline{x}_1,\overline{x}_2,\ldots,\overline{x}_n) \in \mathcal{A^{\beta_-}}:=A^{\beta_-}_0 \times A^{\beta_-}_1 \times \ldots \times A^{\beta_-}_n$ we get

\begin{eqnarray*}
\lefteqn{\liminf_{s\searrow 0} s \log \Prob\left(X_{s\cdot t_0} \in A_0,X_{s\cdot t_1} \in A_1,\ldots,X_{s\cdot {t_n}} \in A_n\right)}\\
&\geq& \max_{\overline{x}\in \mathcal{A}^{\beta_-}} {}-{}\frac{d^2(A_0,\overline{x}_1)}{2\delta_1(1-2\varepsilon)} - \sum_{i=1}^{n-2} \frac{d^2(\overline{x}_i,\overline{x}_{i+1})}{2\delta_{i+1}(1-4\varepsilon)} -\frac{d^2(\overline{x}_{n-1},A_n)}{2\delta_n(1-2\varepsilon)}\\
&\geq&\max_{\overline{x}\in \mathcal{A}^{\beta_-}} {}-{} \sum_{i=0}^{n-1} \frac{d^2(\overline{x}_i,\overline{x}_{i+1})}{2\delta_{i+1}(1-4\varepsilon)} .
\end{eqnarray*}

If $\varepsilon \searrow 0,\ \beta \searrow 0$ we obtain

\begin{equation*}
\liminf_{s\searrow 0} s \log \Prob\left(X_{s\cdot t_0} \in A_0,X_{s\cdot t_1} \in A_1,\ldots,X_{s\cdot {t_n}} \in A_n\right) {}\geq {}\max_{\overline{x}\in \mathcal{A}^0} {}-{} \sum_{i=0}^{n-1} \frac{d^2(\overline{x}_i,\overline{x}_{i+1})}{2(t_{i+1}-t_i)}.
\end{equation*}

(ii) Upper bound:\\[0.2ex]
The proof of the upper bound works nearly the same way. The only difference is that we have to consider for a subset $A \subset \X$ the following open sets
$$A^{\beta_+}:=\{x \in \X : d(A,x)< \beta\}.$$
As in part (i) we get

\begin{equation*}
\limsup_{s\searrow 0} s \log \Prob\left(X_{s\cdot t_0} \in A_0,X_{s\cdot t_1} \in A_1,\ldots,X_{s\cdot {t_n}} \in A_n\right)
{} \leq {} \max_{\overline{x} \in \mathcal{A}^{\beta_+}} {}-{} \sum_{i=0}^{n-1} \frac{d^2(\overline{x}_i,\overline{x}_{i+1})}{2\delta_{i+1}(1-4\varepsilon)} .
\end{equation*}

As before let $\varepsilon \searrow 0,\ \beta \searrow 0$ to obtain

\begin{equation*}
\limsup_{s\searrow 0} s \log \Prob\left(X_{s\cdot t_0} \in A_0,X_{s\cdot t_1} \in A_1,\ldots,X_{s\cdot {t_n}} \in A_n\right)
{} \leq {} \max_{\overline{x} \in \overline{\mathcal{A}}} {}-{} \sum_{i=0}^{n-1} \frac{d^2(\overline{x}_i,\overline{x}_{i+1})}{2(t_{i+1}-t_i)} .
\end{equation*}

\end{proof}

\section{Short-time behaviour of the Markov process}

In the last section we estimate the finite dimensional distributions of the Markov process. Now we want to lift up this result to an estimate of the short-time behaviour of the law of $X_t$ itself. Therefor we will use the theorem of Dawson--G\"artner. This theorem yields the large deviation principle in a space $\mathcal{Y}$ as a consequence of the LDP's in $\mathcal{Y}_i$, where $\mathcal{Y}$ is the projective limit of the projective system $\mathcal{Y}_i$.

To formulate the theorem of Dawson--G\"artner precisely we have to
recall some well known concepts. We mention that a LDP describes the
asymptotic behaviour, as $\varepsilon \to \infty$, of a family of
probability measures $\{\mu_\varepsilon\}$ on $(\Omega,\mathcal{B})$
in terms of a rate function, where a rate function is defined as
follows.

\begin{defn}
A function $I : \Omega \to [0,\infty]$ is called a rate function if
it is lower semi--continuous.

We say that a function $I : \Omega \to [0,\infty]$ is a good rate
function, if $I$ is lower semi--continuous and for all $\alpha \in
[0,\infty)$ the level sets $\psi_I(\alpha)=\{x \in \Omega : I(x)\leq \alpha\}$ are
compact subsets of $\Omega$.
\end{defn}

For any set $\Gamma$, $\overline{\Gamma}$ denotes the closure of
$\Gamma$ and $\Gamma^\circ$ the interior of $\Gamma$. Then we say

\begin{defn}
The family $\{\mu_\varepsilon\}$ of probability measures satisfies
the LDP with good rate function $I$ if, for all subsets $\Gamma \in
\mathcal{B}$,

$$-\inf_{\omega \in \Gamma^\circ}I(\omega) \leq \liminf_{\varepsilon \to 0}
\varepsilon \log \mu_\varepsilon (\Gamma)
\leq \limsup_{\varepsilon \to 0} \varepsilon \log \mu_\varepsilon
(\Gamma) \leq -\inf_{\omega \in \overline{\Gamma}}I(\omega).$$

The infimum of a function over an empty set is interpreted as
$\infty$.
\end{defn}

There is an other weaker form of a LDP where the upper bound is proven only for compact sets.

\begin{defn}
A family of probability measures $\{\mu_\varepsilon\}$ is said to satisfy the weak LDP with rate function $I$ if the upper bound
\begin{equation}
\limsup_{\varepsilon \to 0} \varepsilon \log \mu_{\varepsilon}(\Gamma)\leq -\alpha
\end{equation}
holds for all $\alpha < \infty$ and all compact subsets $\Gamma$ of the complement of level sets $\psi_I(\alpha)^C$ and the lower bound
\begin{equation}
\liminf_{\varepsilon \to 0} \varepsilon \log \mu_{\varepsilon}(\Gamma)\geq -I(x)
\end{equation}
holds for any $x\in \{y : I(y)<\infty\}$ and all measurable $\Gamma$ with $x \in \Gamma^{\circ}$.
\end{defn}

Let $J$ be a partial ordered set and $\{(\Y_j,p_{ij})\}_{i\leq j \in
\N}$ be a projective system, i.e. $\{\Y_j\}_{j \in J}$ is a family
of Hausdorff topological spaces and the continuous maps $p_{ij} :
\Y_j \to \Y_i$ satisfy $p_{ik}=p_{ij}\circ p_{jk}$ for all $i\leq
j\leq k$. Let $\Y = \lim\limits_{\longleftarrow}\Y_j$ be the
projective limit of this system, that is $\Y$ consists of all the
elements $\mathbf{y}=(y_j)_{j\in J}$ for which $y_i=p_{ij}(y_j)$
whenever $i<j$. Further let $p_j : \Y \to \Y_j$ the canonical continuous projections of $\Y$ on the values at $t_0,t_1,\ldots,t_n$ for the partitions $j=\{0=t_0<t_1\ldots<t_n\}$. Then the statement of the theorem of
Dawson--G\"artner reads as

\begin{thm} (Dawson--G\"artner) (cf. \cite{dembo}) \label{dawsongaertner}\\
Let $\{\mu_\varepsilon\}$ be a family of probability measures on
$\Y$. Assume that, for each $j\in J$, the family of push--forward
measures $\{{p_j}_* \mu_\varepsilon\}$ on $\Y_j$ satisfy the LDP
with good rate function $I_j : \Y_j \to [0,\infty]$. Then the family
$\{\mu_\varepsilon\}$ satisfies the LDP on $\Y$ with good rate
function $I : \Y \to [0,\infty]$ given by

$$I(\mathbf{y})=\sup_{j \in J}\{I_j(p_j(\mathbf{y}))\},\quad \mathbf{y} \in \Y.$$

\end{thm}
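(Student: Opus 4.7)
Three things are to be verified: that $I$ is a good rate function, the LDP lower bound on open subsets of $\Y$, and the LDP upper bound on closed subsets.

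\emph{Goodness of $I$.} Since each $I_j$ is lower semi--continuous and each $p_j : \Y \to \Y_j$ is continuous, each composition $I_j\circ p_j$ is lower semi--continuous on $\Y$, and so is their pointwise supremum $I(\mathbf{y})=\sup_j I_j(p_j\mathbf{y})$. For the level sets observe
$$\{\mathbf{y}\in\Y : I(\mathbf{y})\leq\alpha\} \;=\; \bigcap_{j\in J} p_j^{-1}\big(\{I_j\leq\alpha\}\big);$$
identifying $\Y$ with the closed subset of $\prod_j\Y_j$ consisting of coherent threads, $\{I\leq\alpha\}$ sits as a closed subset of $\prod_j\{I_j\leq\alpha\}$, which is compact by Tychonoff.

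\emph{Lower bound.} For open $G\subset\Y$ and $\mathbf{y}\in G$, the projective--limit topology has a base of cylinders $p_j^{-1}(U)$, $U$ open in some $\Y_j$ (a finite intersection of such cylinders reduces to a single one by choosing $j$ above the finitely many indices, using directedness of $J$). Pick such a cylinder with $p_j(\mathbf{y})\in U$ and $p_j^{-1}(U)\subset G$. Then
$$\mu_\varepsilon(G) \;\geq\; \mu_\varepsilon\big(p_j^{-1}(U)\big) \;=\; ({p_j}_*\mu_\varepsilon)(U),$$
and the LDP lower bound for $\{{p_j}_*\mu_\varepsilon\}$ yields $\liminf_\varepsilon \varepsilon\log\mu_\varepsilon(G) \geq -I_j(p_j\mathbf{y}) \geq -I(\mathbf{y})$. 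Taking the supremum over $\mathbf{y}\in G$ gives the bound.

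\emph{Upper bound (the main obstacle).} Fix a closed $F\subset\Y$ and any $\alpha<\inf_F I$. From $F\subset p_j^{-1}(\overline{p_j(F)})$ and the LDP upper bound on $\Y_j$,
$$\limsup_\varepsilon \varepsilon\log\mu_\varepsilon(F) \;\leq\; -\inf_{y_j\in\overline{p_j(F)}} I_j(y_j) \qquad\text{for every } j,$$
so it suffices to prove $\sup_j \inf_{y_j\in\overline{p_j(F)}} I_j(y_j)\geq \alpha$. Argue by contradiction: if the sup were strictly below $\alpha$, then for every $j$ the set $N_j:=\overline{p_j(F)}\cap\{I_j\leq\alpha\}$ would be nonempty (infimum attained by goodness of $I_j$) and compact. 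Here the projective structure is crucial. The contraction principle applied to the continuous map $p_{ij}:\Y_j\to\Y_i$, together with uniqueness of the good rate function for $\{{p_i}_*\mu_\varepsilon\}=\{(p_{ij})_*({p_j}_*\mu_\varepsilon)\}$, yields the compatibility $I_i(p_{ij}(y_j))\leq I_j(y_j)$ for $i\leq j$, whence $p_{ij}(N_j)\subset N_i$. Thus $\{N_j\}$ is a projective system of nonempty compact Hausdorff spaces, and the standard Tychonoff/finite--intersection argument (using directedness of $J$) shows its projective limit is nonempty. Any element $\mathbf{y}^*$ in this limit satisfies $p_j(\mathbf{y}^*)\in\overline{p_j(F)}$ and $I_j(p_j\mathbf{y}^*)\leq\alpha$ for every $j$. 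The identity $\overline{F}=\bigcap_j p_j^{-1}(\overline{p_j(F)})$ (valid for the projective--limit topology) then forces $\mathbf{y}^*\in F$, while $I(\mathbf{y}^*)\leq\alpha$ contradicts $\inf_F I>\alpha$. The principal obstacle is precisely this step: the one--factor bound produces $\sup_j\inf$ of rate values whereas the target is $\inf\sup$, and the min--max inequality fails in the wrong direction; the compatibility $I_i\circ p_{ij}\leq I_j$ and the nonemptiness of the projective limit of the compact sets $N_j$ are what bridge the gap.
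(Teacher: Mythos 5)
The paper does not supply its own proof of this theorem: it is stated with a pointer to Dembo--Zeitouni, and the only argument in the text touching this material is the proof of Corollary \ref{weakdawsongaertner}, the weak--LDP variant where the upper bound is taken only over compact sets. Your proof is correct and is essentially the argument that the citation points at: lower semicontinuity of $\sup_j I_j\circ p_j$ together with Tychonoff give goodness of $I$; basic cylinders (reduced to a single $p_j^{-1}(U)$ via directedness of $J$) give the open lower bound; and the closed upper bound is closed out by the nonempty inverse limit of the compact sets $N_j=\overline{p_j(F)}\cap\psi_{I_j}(\alpha)$, with the monotonicity $I_i\circ p_{ij}\leq I_j$ supplied by contraction plus uniqueness of the rate function. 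You correctly identify the crux, namely that the one--factor bound gives a $\sup_j\inf$ while the target is an $\inf\sup$, and that the inverse--limit-of-nonempty-compacta lemma is precisely what reverses the inequality. Set against the paper's proof of the weak corollary: there $\Gamma$ is compact, so $p_j(\Gamma)$ is already compact and one can write $\Gamma=\lim\limits_{\longleftarrow}p_j(\Gamma)$ directly and intersect with $\lim\limits_{\longleftarrow}\psi_{I_j}(\alpha)$; your full version must instead pass to $\overline{p_j(F)}$ (which need not equal $p_j(F)$ when $F$ is closed but noncompact) and invoke the additional identity $F=\bigcap_j p_j^{-1}(\overline{p_j(F)})$, valid for closed $F$ in the projective--limit topology. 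The only caveat worth flagging is that uniqueness of the good rate function, which your contraction step relies on, requires the $\Y_j$ to be regular rather than merely Hausdorff; this is the standing hypothesis in the cited reference even though the paper's phrasing only says ``Hausdorff topological spaces.''
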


\begin{rem}
For the lower bound it is not necessary to assume the functional $I$ to be a good rate function, i.e. we do not have to assume that all the level sets are compact. On the other hand for the upper bound it is crucial assumption that they are all compact.
\end{rem}

To abolish having not a good rate function we can formulate the following corollary

\begin{cor} \label{weakdawsongaertner}
Let $\{\mu_\varepsilon\}$ be a family of probability measures on
$\Y$. Assume that, for each $j\in J$, the family of push--forward
measures $\{{p_j}_* \mu_\varepsilon\}$ on $\Y_j$ satisfy the weak LDP
with rate function $I_j : \Y_j \to [0,\infty]$. Then the family
$\{\mu_\varepsilon\}$ satisfies the weak LDP on $\Y$ with rate
function $I : \Y \to [0,\infty]$ given by

$$I(\mathbf{y})=\sup_{j \in J}\{I_j(p_j(\mathbf{y}))\},\quad \mathbf{y} \in \Y.$$

\end{cor}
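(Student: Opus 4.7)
The plan is to mirror the classical proof of Dawson--G\"artner but avoid every appeal to compactness of level sets. I first observe that $I(\mathbf{y})=\sup_j I_j(p_j(\mathbf{y}))$ is automatically lower semi--continuous on $\Y$: each $I_j\circ p_j$ is lsc as the composition of a continuous map with an lsc function, and arbitrary suprema of lsc functions are lsc. Since $J$ is directed (the standing hypothesis on a projective system), the cylinder sets $p_j^{-1}(U)$ with $U\subset\Y_j$ open form a base of the projective limit topology on $\Y$.

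\textbf{Lower bound.} Let $\mathbf{y}$ satisfy $I(\mathbf{y})<\infty$ and let $\Gamma$ be measurable with $\mathbf{y}\in\Gamma^\circ$. I pick a basic neighborhood $p_j^{-1}(U)\subset\Gamma$ of $\mathbf{y}$; then $p_j(\mathbf{y})\in U$ and applying the weak LDP lower bound on $\Y_j$ at the point $p_j(\mathbf{y})$ gives
$$\liminf_{\varepsilon\to 0}\varepsilon\log\mu_\varepsilon(\Gamma)\ \geq\ \liminf_{\varepsilon\to 0}\varepsilon\log (p_j)_*\mu_\varepsilon(U)\ \geq\ -I_j(p_j(\mathbf{y}))\ \geq\ -I(\mathbf{y}).$$

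\textbf{Upper bound.} Fix $\alpha<\infty$ and let $\Gamma$ be a compact subset of $\psi_I(\alpha)^C=\{I>\alpha\}$. Because $I$ is lsc on the Hausdorff space $\Y$, the infimum $\beta:=\inf_\Gamma I$ is attained and strictly exceeds $\alpha$; choose $\alpha'\in(\alpha,\beta)$. For every $\mathbf{y}\in\Gamma$ there is $j(\mathbf{y})$ with $I_{j(\mathbf{y})}(p_{j(\mathbf{y})}(\mathbf{y}))>\alpha'$, so $\mathbf{y}$ lies in the open cylinder $p_{j(\mathbf{y})}^{-1}(\{I_{j(\mathbf{y})}>\alpha'\})$. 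Extracting a finite subcover indexed by $j_1,\dots,j_n$, I set
$$K_k:=p_{j_k}(\Gamma)\cap\{z\in\Y_{j_k}:I_{j_k}(z)\ge\alpha'\}.$$
Each $K_k$ is compact (the image $p_{j_k}(\Gamma)$ is compact by continuity of $p_{j_k}$, the super--level set is closed by lsc of $I_{j_k}$, and $\Y_{j_k}$ is Hausdorff), and $K_k\subset\{I_{j_k}>\alpha\}=\psi_{I_{j_k}}(\alpha)^C$. A direct check shows $\Gamma\subset\bigcup_k p_{j_k}^{-1}(K_k)$: any $\mathbf{y}\in\Gamma$ lies in some $p_{j_k}^{-1}(\{I_{j_k}>\alpha'\})$, so $p_{j_k}(\mathbf{y})\in p_{j_k}(\Gamma)\cap\{I_{j_k}\ge\alpha'\}=K_k$. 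Hence $\mu_\varepsilon(\Gamma)\leq\sum_{k=1}^n (p_{j_k})_*\mu_\varepsilon(K_k)$. Taking $\varepsilon\log$, using $\varepsilon\log n\to 0$, and applying the weak LDP upper bound on each $\Y_{j_k}$ to the compact $K_k\subset\psi_{I_{j_k}}(\alpha)^C$, I obtain $\limsup_{\varepsilon\to 0}\varepsilon\log\mu_\varepsilon(\Gamma)\leq -\alpha$.

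The main obstacle is the upper bound. In the classical Dawson--G\"artner proof one approximates arbitrary closed sets using compactness of the level sets $\psi_I(\alpha)$, then pushes forward to some $\Y_j$. Here I have no compact level sets to work with; the substitute is to start from the compact $\Gamma$, use lsc of $I$ to secure an $\alpha'>\alpha$ strictly below $\inf_\Gamma I$, and then exploit continuity of $p_{j_k}$ (compact goes to compact) and lsc of $I_{j_k}$ (closed super--level set) to produce compact $K_k\subset\psi_{I_{j_k}}(\alpha)^C$ whose preimages still cover $\Gamma$. The cushion $\alpha'>\alpha$ is what makes it possible to replace the open set $\{I_{j_k}>\alpha'\}$ used in the cover by the closed set $\{I_{j_k}\ge\alpha'\}$ without losing the condition required to invoke the weak LDP upper bound on each factor.
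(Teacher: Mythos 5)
Your proof is correct. The lower bound argument is essentially the same as the paper's (and as in the classical Dawson--G\"artner theorem): find a basic open cylinder $p_j^{-1}(U)$ inside $\Gamma^\circ$ containing $\mathbf{y}$ and invoke the weak LDP lower bound in $\Y_j$ at $p_j(\mathbf{y})$.

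Your upper bound, however, is a genuinely different argument from the paper's. The paper shows $\psi_I(\alpha)=\lim_\leftarrow\psi_{I_j}(\alpha)$ and $\Gamma=\lim_\leftarrow\Gamma_j$, and then invokes the standard structural fact about projective limits (Theorem B.4 in Dembo--Zeitouni: a projective limit of nonempty compact sets is nonempty) to extract a \emph{single} index $j$ with $\Gamma_j\cap\psi_{I_j}(\alpha)=\emptyset$, reducing the estimate to a single factor $\Y_j$. That route leans on the paper's claim $\psi_{I_i}(\alpha)=p_{ij}(\psi_{I_j}(\alpha))$, which in turn implicitly uses the compatibility relation $I_i=\inf_{p_{ij}=\cdot}I_j$. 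Your argument sidesteps all of this: the cushion $\alpha'\in(\alpha,\inf_\Gamma I)$ obtained from lower semicontinuity of $I$ on the compact $\Gamma$ lets you cover $\Gamma$ by finitely many open cylinders, shrink the open superlevel sets to closed (hence compact after intersecting with $p_{j_k}(\Gamma)$) sets $K_k$ that still cover and still miss $\psi_{I_{j_k}}(\alpha)$, and then bound $\mu_\varepsilon(\Gamma)$ by a finite sum. This buys you a proof that is more self-contained (no appeal to the projective-limit-of-compacta theorem) and does not require any explicit compatibility of the family $\{I_j\}$ beyond what the hypotheses literally grant. The cost is a finite sum of $n$ terms rather than a single one, but the $\varepsilon\log n\to 0$ step absorbs that harmlessly. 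Both proofs use that $J$ is directed (so that cylinders form a base), and both use Hausdorffness of the $\Y_j$; your argument makes these uses explicit, which is a small improvement in clarity.
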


\begin{proof}
The proof works most like the proof of the theorem (\ref{dawsongaertner}) of Dawson and G\"artner, for the lower bound it is exactly the same. For the upper bound first we get $\psi_{I_i}(\alpha)=p_{ij}\left(\psi_{I_i}(\alpha)\right)$ for all $i<j$ because all of the level sets $\psi_{I_j}(\alpha)$ of $I_j$ are closed subsets of $\mathcal{Y}_j$. Hence we get
$$\psi_I(\alpha)= \lim_{\longleftarrow} \psi_{I_j}(\alpha),$$
and $\psi_I(\alpha)$ as the projective limit of closed sets is itself a closed subset of $\mathcal{Y}$.

Now we take a compact subset $\Gamma \subset \mathcal{Y}$ and consider the projections $\Gamma_j:=p_j(\Gamma)$, since $p_j: \mathcal{Y}\to \mathcal{Y}_j$ is continuous this sets are also compact and we get
$$\Gamma=\lim_{\longleftarrow} \Gamma_j$$
and consequently
$$\Gamma \cap \psi_I(\alpha)=\lim_{\longleftarrow} \left( \Gamma_j \cap \psi_{I_j}(\alpha)\right).$$
For all $\alpha > 0$ and all compact subsets $\Gamma$ of $\psi_{I}(\alpha)^C$ (i.e. $\Gamma \cap \psi_I(\alpha)=\emptyset$) we have $\Gamma_j \cap \psi_{I_j}(\alpha)=\emptyset$ for some $j\in J$ (cf. theorem B.4 in (\cite{dembo})). Thus we get
$$\limsup_{\varepsilon \to 0} \varepsilon \log \mu_{\varepsilon}(\Gamma) \leq \limsup_{\varepsilon \to 0} \varepsilon \log \mu_{\varepsilon} \circ p_j^{-1}(\Gamma_j)\leq -\alpha.$$
\end{proof}

Coming back to the previous situation we define a discrete version of the energy functional of a curve. This energy functional will play the role of the rate function in the last corollary \ref{weakdawsongaertner}.

\begin{defn}
Let $\Delta^n=\{0=t_1 < t_1< \ldots t_n=1\}$ be a partition of the unit interval $[0,1]$, then the discrete energy functional $H_{\Delta^n} : \X^{n+1} \to [0,\infty)$ is defined by
\begin{equation}\label{discenergy}
H_{\Delta^n}(\overline{x}):=\frac{1}{2} \sum_{i=0}^{n-1} \frac{d^2(x_i,x_{i+1})}{t_{i+1}-t_i}
\end{equation}
for all $\overline{x} \in \X^{n+1}$.
\end{defn}

Now we can also define the energy of a curve $\gamma \in \Omega=\mathcal{C}([0,1],\X)$

\begin{defn}
For all $\gamma \in \Omega$ we define the energy $H : \Omega \to [0,\infty)$ of $\gamma$ by
\begin{equation}\label{enery}
H(\gamma):=\sup_{\Delta^n} H_{\Delta^n}(p_{\Delta^n}(\gamma)),
\end{equation}
where the supremum is taken over all partitions $\Delta^n$ of the unit interval $[0,1]$ and $p_{\Delta^n}(\gamma)=(\gamma(t_0),\gamma(t_1),\ldots ,\gamma(t_n))\in \X^{n+1}$.
\end{defn}

We are now able to describe the short time behaviour of the law of the Markov process $X_t$. Theorem \ref{endldim} states that the finite dimensional distributions $\Prob\left(X_{s\cdot t_0} \in \cdot,X_{s\cdot t_1} \in \cdot,\ldots,X_{s\cdot t_n}\in \cdot\right)$ satisfy the weak LDP with the discrete energy functional as rate function. Then corollary \ref{weakdawsongaertner} gives us the weak LDP for the law of the Markov process itself with rate function $H$. To apply corollary \ref{weakdawsongaertner} consider
$$J=\bigcup_{n=0}^\infty\{\Delta^n : \Delta^n=\{0=t_0<t_1<\ldots<t_n=1\}\mbox{ partition of }[0,1]\}.$$
A partial ordering on $J$ is induced by inclusion.

So at the end of this section we obtain the following theorem which is a essential part of our main theorem
\begin{thm}
Let $\Delta^n$ as above an arbitrary partition of the unit interval $[0,1]$ and for all $s >0$ let $X^s_\cdot=\{X_{s\cdot t}\}_{0\leq t \leq 1}$ be the rescaled Markov process. Then we have the following estimates
\begin{itemize}
\item[(i)] For all $\alpha > 0$ and all compact subsets $\Gamma$ of $\{\gamma \in \Omega : H(\gamma) > \alpha\}$ we have
$$\limsup_{s \to 0} s \log \Prob(X^s_\cdot \in \Gamma) \leq - \alpha.$$
\item[(ii)] For all $\gamma \in \{\gamma : H(\gamma)<\infty\}$ and all measurable $\Gamma$ with $\gamma \in \Gamma^{\circ}$ we have
$$\limsup_{s \to 0} s \log \Prob(X^s_\cdot \in \Gamma) \geq - H(\gamma).$$
\end{itemize}
\end{thm}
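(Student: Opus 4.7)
The plan is to apply Corollary \ref{weakdawsongaertner} directly, using Theorem \ref{endldim} as the input. I take as index set $J$ the collection of all finite partitions of $[0,1]$ ordered by inclusion, and for $j=\Delta^n \in J$ I set $\Y_j = \X^{n+1}$. The connecting maps $p_{ij}: \X^{m+1}\to\X^{n+1}$ (for $\Delta^n\subset\Delta^m$) are the obvious coordinate projections, and the canonical map $p_j: \Omega\to\X^{n+1}$ is $\gamma\mapsto(\gamma(t_0),\ldots,\gamma(t_n))$, which is continuous on $\Omega=\C([0,1],\X)$.

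First I would check that for each $j=\Delta^n$, the push-forward $\mu^s_j := (p_j)_*\Prob(X^s_\cdot\in\cdot\,)$ satisfies the weak LDP on $\X^{n+1}$ with rate $H_{\Delta^n}$. The lower bound at a point $\overline{x}$ with $H_{\Delta^n}(\overline{x})<\infty$ follows from Theorem \ref{endldim}(i) applied to any open neighbourhood $\mathcal{A}\ni\overline{x}$, since the supremum of $-H_{\Delta^n}$ over $\mathcal{A}^\circ$ is at least $-H_{\Delta^n}(\overline{x})$. The upper bound on compact $\Gamma\subset\psi_{H_{\Delta^n}}(\alpha)^C$ follows from Theorem \ref{endldim}(ii): by continuity of $d$ and hence of $H_{\Delta^n}$, the compact set $\Gamma$ is contained in an open neighbourhood on which $H_{\Delta^n}>\alpha$, and covering $\Gamma$ by finitely many product sets $\overline{\mathcal{A}}$ with $\inf_{\overline{\mathcal{A}}} H_{\Delta^n}>\alpha$ and using the elementary $\limsup$ inequality for finite unions yields $\limsup_{s\to 0} s\log\mu^s_j(\Gamma)\leq -\alpha$. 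Lower semi-continuity of $H_{\Delta^n}$, needed for the rate-function property, is immediate from its continuity.

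Next I apply Corollary \ref{weakdawsongaertner} to the family $\{\mu^s_j\}$ to obtain the weak LDP on the projective limit $\Y=\lim_{\longleftarrow}\X^{n+1}$ with rate $I(\mathbf{y})=\sup_{j\in J} H_{\Delta^n}(p_j(\mathbf{y}))$. When restricted to continuous paths $\gamma\in\Omega\subset\Y$, this sup is exactly the energy $H(\gamma)$ by definition \eqref{enery}. Because the laws $\Prob(X^s_\cdot\in\cdot\,)$ are concentrated on $\Omega$, the weak LDP on $\Y$ transfers to $\Omega$: for any measurable $\Gamma\subset\Omega$ one has $\Prob(X^s_\cdot\in\Gamma)=\mu^s_\Y(\Gamma)$ up to the obvious embedding, so the two assertions of the theorem are just the two halves of the weak LDP statement with rate $H$ specialized to compact $\Gamma\subset\{H>\alpha\}$ and to interior points of $\Gamma$ with finite energy.

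The main obstacle I expect is the compatibility between $\Omega=\C([0,1],\X)$ and the projective limit $\Y$, which is genuinely larger (containing all functions $[0,1]\to\X$). One has to argue that a compact set $\Gamma\subset\Omega$ (in the uniform topology) is identified with a set in $\Y$ whose complement separates it from the closed level set $\psi_I(\alpha)$ in the projective-limit topology, so that Corollary \ref{weakdawsongaertner}'s Theorem B.4 argument produces a finite partition $j$ with $p_j(\Gamma)\cap\psi_{H_{\Delta^n}}(\alpha)=\emptyset$. Since $p_j$ is continuous on $\Omega$, $p_j(\Gamma)$ is compact in $\X^{n+1}$, and one then invokes the finite-dimensional upper bound. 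A minor check is that for the lower bound no continuity of $H$ in the uniform topology on $\Omega$ is needed, only that $\gamma\in\Gamma^\circ$ implies some cylindrical neighbourhood $p_j^{-1}(U)\subset\Gamma$ for $U\ni p_j(\gamma)$ open, which is automatic for the subspace-induced topology.
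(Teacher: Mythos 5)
Your proposal takes the paper's own route: apply Corollary \ref{weakdawsongaertner} with $J$ the set of finite partitions of $[0,1]$, $\Y_j=\X^{n+1}$, and the finite-dimensional weak LDPs supplied by Theorem \ref{endldim}. The paper itself offers only a two-sentence sketch and no formal proof, so the match in approach is complete and your write-up actually supplies more detail than the source.

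Worth noting: the $\Omega$-versus-$\Y$ topology tension you flag at the end is a genuine subtlety that the paper silently ignores. For the upper bound it is handled as you say --- $p_j$ is continuous from $\Omega$ with the uniform topology to $\X^{n+1}$, so it sends compacts to compacts, the inclusion $\Omega\hookrightarrow\Y$ is continuous, and the Theorem B.4 argument of \cite{dembo} applies. For the lower bound, however, an interior point of $\Gamma$ in the uniform topology on $\Omega$ need not admit a cylindrical neighbourhood $p_j^{-1}(U)\subset\Gamma$, since cylinders form a base for the coarser product (pointwise) topology and not for the uniform one; so your hedge \emph{automatic for the subspace-induced topology} is exactly right, and with the uniform topology on $\Omega$ that step requires an additional argument that neither you nor the paper supplies.
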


\subsection{Identification of the Energy Functional}
%
%
%
%
In the previous part of this section we have seen, that the weak LDP holds for the law of the Markov process with rate function $H$. In the following we want to get a more explicit expression for the energy $H$. For this we consider absolutely continuous curves $\gamma \in AC^2([0,1],\X)$ with finite $2$-energy. This are curves for which exists $m \in L^2([0,1])$ such that
\begin{equation}\label{ac2}
d(\gamma(s),\gamma(t))\leq \int_s^t m(r)\mathrm{d}r \quad \forall s,t \in [0,1],\ s\leq t.
\end{equation}
This curves have the property to be differentiable (in the metric sense) a.e.. To be more precise the following theorem (cf. \cite{Ambrosio}) holds

\begin{thm}\label{metrabl}
Let $\gamma \in AC^2([0,1],\X)$. Then for Lebesgue-a.e. $t\in [0,1]$ there exists the limit
\begin{equation}
|\dot{\gamma}|(t):= \lim_{h \to 0} \frac{d(\gamma(t),\gamma(t+h))}{|h|}.
\end{equation}
Furthermore $|\dot{\gamma}|\in L^2$ and we know $d(\gamma(s),\gamma(t))\leq \int_s^t |\dot{\gamma}|(r)\mathrm{d}r$. Moreover $|\dot{\gamma}|(t)\leq m(t)$ for Lebesgue-a.e. $t \in [0,1]$, for all $m$ such that (\ref{ac2}) holds.
\end{thm}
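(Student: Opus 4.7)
The plan is to follow the classical Ambrosio strategy, converting the metric-valued problem to countably many scalar absolutely-continuous functions. Since $(\X,d)$ is separable, I first fix a countable dense subset $\{z_k\}_{k \in \N} \subset \X$ and form the real-valued functions $\varphi_k(t) := d(\gamma(t), z_k)$. The reverse triangle inequality together with (\ref{ac2}) gives
$$|\varphi_k(s) - \varphi_k(t)| \leq d(\gamma(s), \gamma(t)) \leq \int_s^t m(r)\,\mathrm{d}r \qquad \text{for } s \leq t,$$
so each $\varphi_k$ is absolutely continuous on $[0,1]$, hence differentiable Lebesgue-a.e.\ with $|\varphi_k'(t)| \leq m(t)$ a.e. Discarding the countable union of these null sets, I define on its complement
$$|\dot\gamma|(t) := \sup_{k \in \N} |\varphi_k'(t)|.$$
Since this candidate inherits the pointwise bound $|\dot\gamma|(t) \leq m(t)$ a.e., the minimality statement holds, and $|\dot\gamma| \in L^2$ follows from $m \in L^2$; crucially the definition of $|\dot\gamma|$ makes no reference to $m$, so the bound is automatic for every admissible $m$.

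Next I would derive the integral inequality $d(\gamma(s), \gamma(t)) \leq \int_s^t |\dot\gamma|(r)\,\mathrm{d}r$. For fixed $s < t$ and each $k$,
$$|\varphi_k(t) - \varphi_k(s)| = \Bigabs{\int_s^t \varphi_k'(r)\,\mathrm{d}r} \leq \int_s^t |\dot\gamma|(r)\,\mathrm{d}r.$$
Picking a subsequence $z_{k_j} \to \gamma(s)$ forces $|\varphi_{k_j}(t) - \varphi_{k_j}(s)| \to d(\gamma(s), \gamma(t))$ by continuity of $d$, and the desired inequality follows in the limit.

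The main obstacle, and the heart of the theorem, is the a.e.\ existence of the metric derivative limit; I would establish this by sandwiching. For the upper bound, the integral inequality just proven combined with Lebesgue's differentiation theorem applied to $|\dot\gamma| \in L^1([0,1])$ gives, at every Lebesgue point $t$,
$$\limsup_{h \to 0} \frac{d(\gamma(t), \gamma(t+h))}{|h|} \leq \lim_{h \to 0} \frac{1}{|h|}\Bigabs{\int_t^{t+h} |\dot\gamma|(r)\,\mathrm{d}r} = |\dot\gamma|(t).$$
For the matching lower bound, the reverse triangle inequality yields $d(\gamma(t), \gamma(t+h)) \geq |\varphi_k(t+h) - \varphi_k(t)|$ for every $k$, so dividing by $|h|$ and letting $h\to 0$ at a simultaneous differentiability point of all $\varphi_k$ produces $\liminf_{h\to 0} \frac{d(\gamma(t), \gamma(t+h))}{|h|} \geq |\varphi_k'(t)|$; taking the supremum over $k$ closes the sandwich. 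The delicate point is precisely that this countable supremum captures the full infinitesimal stretching of $\gamma$, which is exactly what the density argument underlying the integral inequality guarantees, so separability of $(\X,d)$ is indispensable.
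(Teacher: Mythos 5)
Your proof is correct, and it reproduces precisely the classical argument of Ambrosio--Gigli--Savar\'e (Theorem 1.1.2 in the cited reference). The paper itself does not prove Theorem \ref{metrabl}; it simply states it with the citation ``cf.\ \cite{Ambrosio}.'' You have in effect supplied the proof the paper delegates to the literature: separability of $(\X,d)$ yields a countable family $\varphi_k(t)=d(\gamma(t),z_k)$ of scalar absolutely continuous functions, the candidate metric derivative is the countable pointwise supremum of $|\varphi_k'|$ (hence measurable and dominated by every admissible $m$, giving both $L^2$-membership and minimality for free), the integral bound $d(\gamma(s),\gamma(t))\le\int_s^t|\dot\gamma|$ follows by approximating $\gamma(s)$ from the dense set, and the a.e.\ existence of the limit is a sandwich between the Lebesgue differentiation theorem for $|\dot\gamma|$ on one side and the differentiability points of the $\varphi_k$ on the other. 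This is exactly the route the cited source takes, and every step as you have written it is sound.
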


Now we are able to formulate following lemma

\begin{lemma}\label{acenergy}
For all $\gamma \in \Omega$ we define
\begin{equation}
\widetilde{H}(\gamma):=\left\{ \begin{array}{cl}
                            \frac{1}{2}\int_0^1|\dot{\gamma}|^2(r)\mathrm{d}r & \mathrm{,\,if } \gamma \in AC^2([0,1],\X)\\
                            \infty & \mathrm{,\,else.}
                            \end{array}\right.
\end{equation}
Then $H(\gamma)\leq \widetilde{H}(\gamma)$.
\end{lemma}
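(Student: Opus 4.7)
The statement is trivial when $\gamma \notin AC^2([0,1],\X)$, since then $\widetilde{H}(\gamma) = \infty$. So the real content is to show that for $\gamma \in AC^2([0,1],\X)$, every discrete energy $H_{\Delta^n}(p_{\Delta^n}(\gamma))$ is bounded above by $\frac{1}{2}\int_0^1 |\dot{\gamma}|^2(r)\,\mathrm{d}r$, uniformly in the partition $\Delta^n$; taking the supremum over partitions then yields the claim by the definition of $H$ in (\ref{enery}).

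The plan is to combine the integral bound from Theorem \ref{metrabl} with a single application of the Cauchy--Schwarz inequality on each sub-interval. Fix a partition $\Delta^n = \{0=t_0<t_1<\ldots<t_n=1\}$. By Theorem \ref{metrabl}, for any $i$ we have
\begin{equation*}
d(\gamma(t_i),\gamma(t_{i+1})) \leq \int_{t_i}^{t_{i+1}} |\dot{\gamma}|(r)\,\mathrm{d}r,
\end{equation*}
and applying Cauchy--Schwarz to the right-hand side against the constant function $\mathds{1}_{[t_i,t_{i+1}]}$ gives
\begin{equation*}
d^2(\gamma(t_i),\gamma(t_{i+1})) \leq (t_{i+1}-t_i) \int_{t_i}^{t_{i+1}} |\dot{\gamma}|^2(r)\,\mathrm{d}r.
\end{equation*}
Dividing by $t_{i+1}-t_i$ and summing the resulting inequality over $i=0,\ldots,n-1$ yields, in view of (\ref{discenergy}),
\begin{equation*}
H_{\Delta^n}(p_{\Delta^n}(\gamma)) \;=\; \frac{1}{2}\sum_{i=0}^{n-1} \frac{d^2(\gamma(t_i),\gamma(t_{i+1}))}{t_{i+1}-t_i} \;\leq\; \frac{1}{2}\sum_{i=0}^{n-1}\int_{t_i}^{t_{i+1}}|\dot{\gamma}|^2(r)\,\mathrm{d}r \;=\; \frac{1}{2}\int_0^1 |\dot{\gamma}|^2(r)\,\mathrm{d}r \;=\; \widetilde{H}(\gamma).
\end{equation*}

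Taking the supremum over all partitions $\Delta^n$ of $[0,1]$ on the left and invoking (\ref{enery}) produces $H(\gamma)\leq \widetilde{H}(\gamma)$. There is no real obstacle here: the only subtlety is making sure the integral bound from Theorem \ref{metrabl} is available, which is guaranteed by the hypothesis $\gamma \in AC^2([0,1],\X)$, and that $|\dot{\gamma}| \in L^2$ so the final integral is finite. The converse inequality (and hence equality $H=\widetilde{H}$), which would presumably come next and require constructing a Riemann-type approximation of $\int_0^1 |\dot{\gamma}|^2$ from the piecewise ratios $d(\gamma(t_i),\gamma(t_{i+1}))/(t_{i+1}-t_i)$, is the delicate direction; but the present lemma asserts only the easy half.
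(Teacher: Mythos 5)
Your proof is correct and follows essentially the same route as the paper: handle the $\gamma\notin AC^2$ case trivially, then for $\gamma\in AC^2$ apply the integral bound $d(\gamma(t_i),\gamma(t_{i+1}))\leq\int_{t_i}^{t_{i+1}}|\dot\gamma|(r)\,\mathrm{d}r$ from Theorem~\ref{metrabl} together with Cauchy--Schwarz on each subinterval, sum, and take the supremum over partitions. The paper leaves the Cauchy--Schwarz step implicit in the inequality $(t_{i+1}-t_i)^{-1}\bigl(\int_{t_i}^{t_{i+1}}|\dot\gamma|\bigr)^2\leq\int_{t_i}^{t_{i+1}}|\dot\gamma|^2$, which you have made explicit, but the argument is identical.
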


\begin{proof}
(i): $\gamma \not \in AC^2 \quad \Longrightarrow \quad H(\gamma)\leq \widetilde{H}(\gamma)=\infty.$ \checkmark\\[2ex]
(ii): $\gamma \in AC^2$: Let $\Delta^n=\{0=t_0<t_1<\ldots<t_n=1\}$ be an arbitrary partition, then
\begin{eqnarray*}
\frac{1}{2}\sum_{i=0}^{n-1}\frac{d^2(\gamma(t_{i}),\gamma(t_{i+1}))}{t_{i+1}-t_i} &=& \frac{1}{2}\sum_{i=0}^{n-1} (t_{i+1}-t_i) \left(\frac{d(\gamma(t_{i}),\gamma(t_{i+1}))}{t_{i+1}-t_i}\right)^2\\
&\leq& \frac{1}{2} \sum_{i=0}^{n-1} (t_{i+1}-t_i)^{-1} \left(\int_{t_i}^{t_{i+1}} |\dot{\gamma}|(r) \mathrm{d}r\right)^2\\
&\leq&\frac{1}{2}  \sum_{i=0}^{n-1} \int_{t_i}^{t_{i+1}} |\dot{\gamma}|^2(r) \mathrm{d}r = \frac{1}{2} \int_0^1 |\dot{\gamma}|^2(r) \mathrm{d}r=\widetilde{H}(\gamma).
\end{eqnarray*}
Since this holds true for all partitions we get $H(\gamma)\leq \widetilde{H}(\gamma)$.
\end{proof}

\begin{rem}
With the notation from above the lower bound of the weak LDP of the law of the rescaled Markov process $X^s_\cdot=X_{s\cdot}$ reads as
$$\liminf_{s\searrow 0} s \log \Prob\left(X^s_\cdot \in \Gamma\right) \geq - \inf_{\gamma \in \Gamma^\circ} \widetilde{H}(\gamma).$$
\end{rem}

The next goal is to prove equality in the conclusion of lemma \ref{acenergy}, namely

\begin{thm}
Let $\gamma \in \Omega$ and $H(\gamma)$ and $\widetilde H(\gamma)$ defined as above. Then
$$H(\gamma)=\widetilde{H}(\gamma).$$
\end{thm}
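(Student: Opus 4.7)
The inequality $H(\gamma)\leq \widetilde H(\gamma)$ has already been established in Lemma~\ref{acenergy}, so the task is the reverse inequality $\widetilde H(\gamma)\leq H(\gamma)$. If $H(\gamma)=\infty$ there is nothing to prove; assume therefore $H(\gamma)<\infty$. The plan is to construct an $L^2$-function $m$ on $[0,1]$ satisfying
$$
d(\gamma(s),\gamma(t))\leq \int_s^t m(r)\,dr \quad\text{for all } 0\leq s\leq t\leq 1, \qquad \tfrac12\int_0^1 m^2\,dr\leq H(\gamma).
$$
Once this is done, $\gamma\in AC^2([0,1],\X)$ by definition, Theorem~\ref{metrabl} gives $|\dot\gamma|(t)\leq m(t)$ for Lebesgue-a.e.\ $t$, and hence $\widetilde H(\gamma)=\tfrac12\int_0^1 |\dot\gamma|^2\leq \tfrac12\int_0^1 m^2\leq H(\gamma)$.

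\textbf{Construction of $m$ by weak compactness.} For each $n\in\N$ consider the dyadic partition $\Delta_n=\{k2^{-n}:0\leq k\leq 2^n\}$ of $[0,1]$ and define the piecewise constant function
$$
m_n(t):=\sum_{k=0}^{2^n-1}\frac{d(\gamma(k2^{-n}),\gamma((k+1)2^{-n}))}{2^{-n}}\,\eins_{[k2^{-n},(k+1)2^{-n})}(t).
$$
A direct computation gives $\|m_n\|_{L^2}^2=2H_{\Delta_n}(p_{\Delta_n}(\gamma))\leq 2H(\gamma)$. (Note that in the definition \eqref{enery} the supremum runs over partitions of the whole interval $[0,1]$, but any partition of a subinterval can be augmented by the endpoints $0$ and $1$ at non-negative cost, so discrete energies of sub-partitions are also bounded by $H(\gamma)$.) Since the sequence $(m_n)$ is bounded in the Hilbert space $L^2([0,1])$, it admits a subsequence $(m_{n_j})$ converging weakly to some $m\in L^2([0,1])$ with $\|m\|_{L^2}^2\leq 2H(\gamma)$ by weak lower semicontinuity of the norm.

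\textbf{Distance bound via the triangle inequality.} For any two dyadic numbers $s<t$ lying in $\Delta_{n_0}$ and any $n\geq n_0$, the triangle inequality together with the explicit form of $m_n$ yields
$$
d(\gamma(s),\gamma(t))\;\leq\; \sum_{k:\,s\leq k2^{-n}<t} d(\gamma(k2^{-n}),\gamma((k+1)2^{-n}))\;=\;\int_0^1 m_n(r)\eins_{[s,t]}(r)\,dr.
$$
Passing to the weak limit along $n_j$ (note $\eins_{[s,t]}\in L^2$) gives $d(\gamma(s),\gamma(t))\leq \int_s^t m(r)\,dr$ for all dyadic $s<t$. Since $\gamma\in\C([0,1],\X)$ is continuous and $t\mapsto\int_0^t m(r)\,dr$ is continuous (indeed absolutely continuous), the density of dyadic pairs in $\{(s,t):0\leq s\leq t\leq 1\}$ extends the inequality to arbitrary $s\leq t$. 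Consequently $\gamma\in AC^2([0,1],\X)$, and the conclusion of the previous paragraph applies.

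\textbf{Main obstacle.} The hard part is the implication $H(\gamma)<\infty\Rightarrow\gamma\in AC^2$: the hypothesis only controls quadratic sums over individual partitions, whereas $AC^2$ requires a single $L^2$-majorant governing the distance between \emph{all} pairs $(s,t)$ simultaneously. The weak-compactness argument for the dyadic discrete derivatives $m_n$ is the natural device, but some care is needed to check that the limit estimate passes from dyadic arguments to arbitrary $s,t$; continuity of $\gamma$ (and hence of $d(\gamma(\cdot),\gamma(\cdot))$) together with continuity of the indefinite integral of $m$ makes this extension routine.
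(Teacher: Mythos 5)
Your proof is correct and pursues the same high-level plan as the paper's: discretize $\gamma$ on a refining sequence of partitions, derive a uniform $L^2$ bound of the discrete difference quotients from the $\sup$ defining $H$, pass to a limit to produce a single $L^2$ majorant $m$ with $d(\gamma(s),\gamma(t))\leq\int_s^t m$, then invoke Theorem \ref{metrabl} and integrate. The difference lies in the limiting device. The paper packages the difference quotients as atomic measures $\nu_N$ against reference measures $\mu_N=\tfrac1N\sum\delta_{i/N}$, passes to a weak-$*$ limit, and extracts the $L^2$-density $f$ of $\nu$ from the \emph{joint} lower semicontinuity of the functional $\E(\nu\mid\mu)=\int|\tfrac{d\nu}{d\mu}|^2 d\mu$ as $(\nu_N,\mu_N)\rightharpoonup(\nu,\mu)$. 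You instead keep everything inside $L^2([0,1],\mathrm{Leb})$ from the start, use the Banach--Alaoglu type weak compactness of the bounded sequence $(m_n)$, and get the norm bound from weak lower semicontinuity of $\|\cdot\|_{L^2}$; testing the weak limit against indicators then yields the distance estimate at dyadic pairs, and continuity of $\gamma$ and of $t\mapsto\int_0^t m$ extends it to all pairs. Your route is the cleaner of the two: it avoids setting up a variable reference measure, sidesteps the need for the lower semicontinuity theorem for $\E(\cdot\mid\cdot)$, and bypasses the slight delicacy in the paper of passing the inequality $d(\gamma(s),\gamma(t))\le\nu_N([s,t])$ through weak-$*$ convergence of measures. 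The paper's route, conversely, makes the absolute continuity $\nu\ll\mu$ visible as the mechanism forcing $\gamma\in AC^2$, which is conceptually illuminating. One minor remark: your parenthetical about augmenting sub-partitions by the endpoints is unnecessary here, since your $\Delta_n$ are already full partitions of $[0,1]$, so $H_{\Delta_n}(p_{\Delta_n}(\gamma))\le H(\gamma)$ is immediate from the definition \eqref{enery}.
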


\begin{proof}
It remains to show $\widetilde{H}(\gamma)\leq H(\gamma)$. First of all we observe that if $\gamma \notin AC^2$ then $\sup_{\Delta^n} \sum_{i=0}^{n-1} d(\gamma(t_i),\gamma(t_{i+1}))=\infty$ and hence also $\sup_{\Delta^n} \sum_{i=0}^{n-1} \frac{d^2(\gamma(t_i),\gamma(t_{i+1}))}{t_{i+1}-t_i}=\infty$. Consequently we know $\widetilde{H}(\gamma)=\infty = H(\gamma)$ for all $\gamma \notin AC^2$.

On the other hand if $\gamma \in AC^2$ we see $\sup_{\Delta^n} \sum_{i=0}^{n-1} d(\gamma(t_i),\gamma(t_{i+1})) \leq \sup_{\Delta^n} \int_0^1 m(r) < \infty$ where $m$ is a $L^2$ function (cf. (\ref{ac2})). So in the following considerations it is adequate only to take care about continuous curves $\gamma$ with finite length.

For such a $\gamma$ we define the discrete measure
$$\nu_N:=\sum_{i=0}^{N-1} d\left(\gamma \left( \frac{i}{N}\right),\gamma \left(\frac{i+1}{N} \right)\right).$$
This bounded monotone sequence converges up to subsequences to a measure $\nu$ for $N \to \infty$. Further we know
$$d(\gamma(s),\gamma(t)) \leq \nu_N([s,t]) \quad \mbox{for all } 0\leq s \leq t \leq1 \mbox{ and } s,t \in \left\{0,\frac{1}{N},\ldots,\frac{N-1}{N},1\right\}.$$
Passing to the limit yields
\begin{equation}\label{dleqnu}
d(\gamma(s),\gamma(t)) \leq \nu([s,t]) \quad \mbox{for all } 0\leq s \leq t \leq1.
\end{equation}

Consider
\begin{equation}
\mathcal{E}(\nu|\mu):=\left\{ \begin{array}{cl}
                            \int_0^1|\frac{\mathrm{d}\,\nu}{\mathrm{d}\mu}|^2\mathrm{d}\,\mu & \mathrm{,if\ } \nu \ll \mu\\
                            \infty & \mathrm{,else.}
                            \end{array}\right.
\end{equation}
This is a joint semicontinuous functional.

Let
$$\mu_N:=\sum_{i=0}^{N-1}\frac{1}{N}\delta_{i/N} \rightharpoonup \mu$$
where $\mu$ is the Lebesgue measure on $[0,1]$. Then

$$\mathcal{E}(\nu_N|\mu_N)=\int_0^1 |\frac{\mathrm{d}\,\nu_N}{\mathrm{d}\,\mu_N}|^2\mathrm{d}\mu_N=\sum_{i=0}^{N-1} \frac{d^2\left(\gamma\left(\frac{i}{N}\right),\gamma\left(\frac{i+1}{N}\right)\right)}{1/N}\leq \sup_{\Delta^n} \sum_{i=0}^{n-1}\frac{d^2(\gamma(t_i),\gamma(t_{i+1}))}{t_{i+1}-t_i}=2H(\gamma).$$

So if $H(\gamma) < \infty$ then also $\mathcal{E}(\nu|\mu)< \infty$ and therefore $\nu$ is absolutely continuous with respect to the Lebesgue measure $\mu$. To be more precise $\nu=f\,\mu$ with $||f||_2\leq \sqrt{2H(\gamma)}.$

Together with (\ref{dleqnu}) we see
$$d(\gamma(s),\gamma(t))\leq \nu([s,t])=\int_s^t f(r)\mathrm{d}r.$$
Then theorem \ref{metrabl} yields $|\dot{\gamma}|_r \leq f(r)$ for Lebesgue-a.e. $r \in [0,1]$. Hence we get for $\gamma \in AC^2$

$$\int_0^1|\dot{\gamma}|^2_r \mathrm{d}r \leq \int_0^1 f(r)^2 \mathrm{d}r \leq\int_0^1 \sup_{\Delta^n}\sum_{i=0}^{n-1}\frac{d^2(\gamma(t_i),\gamma(t_{i+1}))}{t_{i+1}-t_i}\,\mathrm{d}r=2H(\gamma).$$
\end{proof}

The argument of the last proof was communicated to us by Professor L. Ambrosio.

Now we are able to state our main theorem

\begin{thm}
Let $\Omega=\mathcal{C}([0,1],\X)$. Let $\Delta^n$ as above a arbitrary partition of the unit interval $[0,1]$ and for all $s >0$ let $X^s_\cdot=\{X_{s\cdot t}\}_{0\leq t \leq 1}$ be the rescaled Markov process. Then we have the following estimates
\begin{itemize}
\item[(i)] For all $\alpha > 0$ and all compact subsets $\Gamma$ of $\{\gamma \in \Omega : \widetilde{H}(\gamma) > \alpha\}$ we have
$$\limsup_{s \to 0} s \log \Prob(X^s_\cdot \in \Gamma) \leq - \alpha.$$
\item[(ii)] For all $\gamma \in \{\gamma : \widetilde{H}(\gamma)<\infty\}$ and all measurable $\Gamma$ with $\gamma \in \Gamma^{\circ}$ we have
$$\limsup_{s \to 0} s \log \Prob(X^s_\cdot \in \Gamma) \geq - \widetilde{H}(\gamma).$$
\end{itemize}
\end{thm}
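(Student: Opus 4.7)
The plan is to observe that the final theorem is literally the weak-LDP theorem stated at the end of the previous subsection, but with $\widetilde{H}$ in place of $H$, and then to invoke the identification $H(\gamma) = \widetilde{H}(\gamma)$ that was just established. Because the two functionals agree on all of $\Omega$, their level sets coincide set-theoretically, and the substitution is direct. (I note that (ii) in the statement presumably means $\liminf$, matching the lower bound in the analogous earlier theorem.)

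First, for (i), I would fix $\alpha > 0$ and a compact subset $\Gamma \subset \{\gamma \in \Omega : \widetilde{H}(\gamma) > \alpha\}$. By the identification $\widetilde{H} = H$, this compact set is precisely a compact subset of $\{\gamma : H(\gamma) > \alpha\}$. The weak LDP upper bound of the previous theorem therefore applies and yields
\[
\limsup_{s \to 0} s \log \Prob(X^s_\cdot \in \Gamma) \leq -\alpha,
\]
which is exactly the claim.

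Second, for (ii), I would pick $\gamma$ with $\widetilde{H}(\gamma) < \infty$; by the identification this is the same condition as $H(\gamma) < \infty$. For any measurable $\Gamma$ with $\gamma \in \Gamma^\circ$, the previous theorem gives
\[
\liminf_{s \to 0} s \log \Prob(X^s_\cdot \in \Gamma) \geq -H(\gamma) = -\widetilde{H}(\gamma),
\]
as required.

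There is no real obstacle: the entire work of the \textit{Identification of the Energy Functional} subsection was designed precisely to enable this substitution. Both cases of the identification matter here — the non-$AC^2$ case (where both functionals are $\infty$, so the equality is vacuous but needed to make the level-set identification hold on all of $\Omega$) and the $AC^2$ case (handled via Ambrosio's argument comparing the discrete measures $\nu_N$ and $\mu_N = \tfrac{1}{N}\sum \delta_{i/N}$ and passing to the limit). The one caveat worth noting is that the resulting statement remains a \emph{weak} LDP, since the earlier theorem only provides the upper bound on compact subsets of $\{\widetilde{H} > \alpha\}$; upgrading to a full LDP would require compactness of the sublevel sets $\{\widetilde{H} \leq \alpha\}$, which is not automatic in this generality and would require additional hypotheses (e.g.\ properness of the metric space).
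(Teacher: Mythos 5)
Your proposal is exactly the paper's (implicit) argument: the final theorem is the weak-LDP statement already proved with rate function $H$, and the preceding identification $H=\widetilde{H}$ lets one substitute $\widetilde{H}$ throughout, which the paper leaves as an immediate consequence without a written proof. Your caveats about the $\liminf$ in (ii) and about the statement remaining only a weak LDP are also correct observations.
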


%
%
%
%
%
%
%
%
%

\bibliographystyle{plain}
\bibliography{lit}

\end{document}